\documentclass[12pt]{amsart}
\usepackage{amssymb}
\usepackage{a4wide}
\usepackage{xcolor}
 \usepackage{boxedminipage}
\usepackage{enumerate, graphicx}
\setcounter{MaxMatrixCols}{50}

\def\Cscr{\mathcal C}

\def\Pscr{\mathcal P}
\def\Qscr{\mathcal Q}

\def\Var{{\mathfrak V}}
\def\Tscr{{\mathcal T}}
\def\Lscr{{\mathcal L}}

\def\Pscr{{\mathcal P}}
\def\M{{\mathcal M}}

\def\rank{{\mathrm{rank}}}
\def\cstar{{\mathrm{star}}}

  \newenvironment{note}[1][Note]
   {\bigskip\begin{center}\begin{boxedminipage}{4.5in}\setlength{\parindent}{1em}\noindent\textbf{#1. }}
   {\end{boxedminipage}\end{center}\bigskip}


  \makeatletter
  \CheckCommand*\refstepcounter[1]{\stepcounter{#1}%
      \protected@edef\@currentlabel
       {\csname p@#1\endcsname\csname the#1\endcsname}%
  }
  \renewcommand*\refstepcounter[1]{\stepcounter{#1}%
    \protected@edef\@currentlabel
      {\csname p@#1\expandafter\endcsname\csname the#1\endcsname}%
  }
  \def\labelformat#1{\expandafter\def\csname p@#1\endcsname##1}
  \DeclareRobustCommand\Ref[1]{\protected@edef\@tempa{\ref{#1}}%
     \expandafter\MakeUppercase\@tempa
  }
  \makeatother

  \makeatletter
  \newcommand{\numberlike}[2]{%
     \expandafter\def\csname c@#1\endcsname{%
         \expandafter\csname c@#2\endcsname}%
  }
  \makeatother


  \def\DefaultNumberTheoremWithin{section}

  \theoremstyle{plain}
  \newtheorem{Lemma}{Lemma}
     \numberwithin{Lemma}{\DefaultNumberTheoremWithin}
     \labelformat{Lemma}{Lemma~#1}
  
     \numberwithin{Claim}{\DefaultNumberTheoremWithin}
     \numberlike{Claim}{Lemma}
     \labelformat{Claim}{Claim~#1}

  \newtheorem{Theorem}{Theorem}
     \numberwithin{Theorem}{\DefaultNumberTheoremWithin}
     \numberlike{Theorem}{Lemma}
     \labelformat{Theorem}{Theorem~#1}
  \newtheorem{Corollary}{Corollary}
     \numberwithin{Corollary}{\DefaultNumberTheoremWithin}
     \numberlike{Corollary}{Lemma}
     \labelformat{Corollary}{Corollary~#1}
  \newtheorem{Proposition}{Proposition}
     \numberwithin{Proposition}{\DefaultNumberTheoremWithin}
     \numberlike{Proposition}{Lemma}
     \labelformat{Proposition}{Proposition~#1}
  
     \numberwithin{Conjecture}{\DefaultNumberTheoremWithin}
     \numberlike{Conjecture}{Lemma}
     \labelformat{Conjecture}{Conjecture~#1}
  
     \numberwithin{Situation}{\DefaultNumberTheoremWithin}
     \numberlike{Situation}{Lemma}
     \labelformat{Situation}{Situation~#1}
 
     \numberwithin{Note}{\DefaultNumberTheoremWithin}
     \numberlike{Note}{Lemma}
     \labelformat{Note}{Note~#1}
     
  \theoremstyle{definition}
  \newtheorem{Definition}{Definition}
     \numberwithin{Definition}{\DefaultNumberTheoremWithin}
     \numberlike{Definition}{Lemma}
     \labelformat{Definition}{Definition~#1}

  \theoremstyle{definition}
  
     \numberwithin{Question}{\DefaultNumberTheoremWithin}
     \numberlike{Question}{Lemma}
     \labelformat{Question}{Question~#1}

  \theoremstyle{definition}
  \newtheorem{Problem}{Problem}
     \numberwithin{Problem}{\DefaultNumberTheoremWithin}
     \numberlike{Problem}{Lemma}
     \labelformat{Problem}{Problem~#1}

     \theoremstyle{remark} \newtheorem{Remark}{Remark}
     \numberwithin{Remark}{\DefaultNumberTheoremWithin}
     \numberlike{Remark}{Lemma}
     \labelformat{Remark}{Remark~#1}
  \theoremstyle{remark}

  \newtheorem{Example}{Example}
     \numberwithin{Example}{\DefaultNumberTheoremWithin}
     \numberlike{Example}{Lemma}
     \labelformat{Example}{Example~#1}
  
     \labelformat{Case}{Case~#1}
     \numberwithin{Case}{Lemma}
  
     \labelformat{Step}{Step~#1}
     \numberwithin{Step}{Lemma}
     
   \newcommand{\new}[1]{\textcolor{black}{#1}}

\labelformat{section}{Section~#1}
  \labelformat{subsection}{Section~#1}
  \labelformat{subsubsection}{Section~#1}

\usepackage{tikz}

\title{The signed Varchenko Determinant for Complexes of Oriented Matroids}
\author{Winfried Hochst\"attler}
    \address{FernUniversit\"at in Hagen \\ 
          Fakult\"at f\"ur Mathematik und Informatik \\
          58084 Hagen\\
          Germany}
     \email{winfried.hochstaettler@fernuni-hagen.de}
     
\author{Sophia Keip}
    \address{FernUniversit\"at in Hagen \\ 
          Fakult\"at f\"ur Mathematik und Informatik \\
          58084 Hagen\\
          Germany}
     \email{sophia.keip@fernuni-hagen.de}

\author{Kolja Knauer}\address{Departament de Matem\`atiques i Inform\`atica,
Universitat de Barcelona, Spain}
\email{kolja.knauer@ub.edu}

\begin{document}
\begin{abstract}
  We generalize the (signed) Varchenko matrix of a hyperplane arrangement to
  complexes of oriented matroids and show that its determinant has a nice factorization. \new{This extends previous results on hyperplane arrangements and oriented matroids.}
\end{abstract}

\maketitle
\section{Introduction}
Varchenko~\cite{V} considered a symmetric matrix
which may be viewed as a bilinear form on the vector space of linear
forms of the set of regions of a hyperplane arrangement \new{$\mathcal{A}$} over some ordered
field $\mathbb{K}$.  The value of the product of the characteristic
vectors of regions $Q_i$ and $Q_j$ is given by a product $\prod_{e \in
  S(Q_i,Q_j)}w_e$, where the $w_e$ are weights on the hyperplanes
$H_e$ of the arrangements and $S(Q_i,Q_j)$ is the set of
hyperplanes that have to be crossed on a shortest path from $Q_i$ to
$Q_j$. \new{The corresponding {\em Varchenko
  Matrix} $B_{\mathcal{A}}$ has entries of the form $\prod_{e \in
  S(Q_i,Q_j)}w_e$ for any pair of regions $Q_i$ and $Q_j$, also see \ref{thm:varchenko}.} In order to determine when the bilinear form is degenerate,
Varchenko~\cite{V} gave an elegant factorization of the determinant of
that matrix, considering the weights as variables.

\begin{Theorem}[Varchenko 1993 \cite{V}]\label{thm:varchenko_original} Let $\mathcal A$ be a real hyperplane
  arrangement, \new{$B_{\mathcal{A}}$ its Varchenko matrix,} and $L(\mathcal{A})$ the \emph{geometric lattice} formed by the
  intersections of hyperplanes in $\mathcal{A}$, then 
\[ \det(B_{\mathcal{A}})= \prod_{F\in L(\mathcal{A})} \left(1-w_F^2\right)^{m_F}\]
where $w_F=\prod_{F \subset H_e} w_e$ and $m_F$ are positive integers depending only on $L(\mathcal{A})$.  
\end{Theorem}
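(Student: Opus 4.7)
The plan is to prove the result by induction on the number $n$ of hyperplanes in $\mathcal{A}$, using a deletion--restriction argument. The base case $n=0$ is trivial: $B=(1)$ and $L(\mathcal{A})$ consists only of the ambient space. For the inductive step, I would fix a hyperplane $H=H_{e_0}$ and partition the set of regions into $R^+$ (regions on the positive side of $H$) and $R^-$ (regions on the negative side). After reordering rows and columns along this partition, the Varchenko matrix takes the block form
\[
B=\begin{pmatrix} B^{++} & w_{e_0} M \\ w_{e_0} M^{\top} & B^{--} \end{pmatrix},
\]
since any shortest gallery from a region in $R^+$ to one in $R^-$ must cross $H$ exactly once, contributing a factor $w_{e_0}$ to every entry of the off-diagonal blocks.

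Next I would perform block row and column operations that correspond geometrically to pairing each region with its \emph{reflection across $H$} (where such a reflection exists among the regions, i.e.\ when the common facet sits on $H$). The goal is a block-triangular form whose two diagonal blocks are, up to a shared factor, the Varchenko matrix of the deletion $\mathcal{A}\setminus H$ and the Varchenko matrix of the restriction $\mathcal{A}^H$. Taking determinants should then extract a factor $(1-w_{e_0}^2)^{\kappa}$, where $\kappa$ is the number of regions of $\mathcal{A}^H$: one such factor for each pair of opposite regions decoupled by the operations.

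Applying the induction hypothesis to both $\mathcal{A}\setminus H$ and $\mathcal{A}^H$ yields factorizations indexed by $L(\mathcal{A}\setminus H)$ and $L(\mathcal{A}^H)$ respectively. The combinatorial bookkeeping is then carried out by the well-known correspondence: a flat $F\in L(\mathcal{A})$ with $F\not\subseteq H$ corresponds to a flat in $L(\mathcal{A}\setminus H)$ with the same weight $w_F$, while a flat $F\subseteq H$ corresponds to a flat in $L(\mathcal{A}^H)$ whose associated weight there equals $w_F/w_{e_0}$ — so that the $w_{e_0}$-contributions combine correctly with the $(1-w_{e_0}^2)^{\kappa}$ produced above, and the extra factor for $F=H$ accounts for the new flat contributed by $H$ itself. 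Matching exponents yields a recursion that defines the $m_F$ and makes their positivity inherited from the inductive step.

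The principal obstacle is the algebraic step that produces the block-triangular form with the correct off-diagonal factor. One must exploit not only that any shortest path between $R^+$ and $R^-$ crosses $H$ exactly once, but also the subtler compatibility that after removing this crossing such a path becomes a shortest path in $\mathcal{A}\setminus H$ between a region and its $H$-reflection. Turning this geometric statement into the precise row/column operations that \emph{decouple} $B^{++}$ from $B^{--}$ modulo the restriction block, and reading off the exponent $\kappa$, is the delicate core of the argument; every other ingredient reduces to lattice bookkeeping once this reduction is in place.
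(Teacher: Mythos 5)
The statement you are proving is quoted in the paper as Varchenko's 1993 result; the paper itself does not reprove it but instead establishes a far more general factorization for complexes of oriented matroids (\ref{thm:varchenko}) via the product decomposition $\Var=\mathcal{M}^{e_1}\cdots\mathcal{M}^{e_r}$ of \ref{pro:fac}, together with the block--triangular analysis of each $\mathcal{M}^{e}$ in \ref{lem:block} and \ref{lem:formula}. That is \emph{not} a deletion--restriction argument: the diagonal blocks $\Var^{e,(+,+)}$ and $\Var^{e,(-,-)}$ appearing there are Varchenko matrices of the two half-space cones (topcones), not of $\mathcal{A}\setminus H$ and $\mathcal{A}^{H}$, and their sizes $|R^{+}|,|R^{-}|$ generically differ from $r(\mathcal{A}\setminus H)$ and $r(\mathcal{A}^{H})$. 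So your proposal takes a genuinely different route.

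Unfortunately that route does not work as stated. The recursion you are aiming for,
\[
\det(B_{\mathcal{A}})=(1-w_{e_0}^2)^{r(\mathcal{A}^{H})}\cdot\det(B_{\mathcal{A}\setminus H})\cdot\det(B_{\mathcal{A}^{H}}),
\]
with the weight of a restricted hyperplane taken to be $w_F/w_{e_0}$ as you propose, fails already for three concurrent lines in $\mathbb{R}^2$. There one computes (for instance by diagonalizing the circulant obtained at equal weights and then using the known shape of the factorization) that
\[
\det(B_{\mathcal{A}})=(1-w_1^2)^2(1-w_2^2)^2(1-w_3^2)^2\,(1-w_1^2w_2^2w_3^2),
\]
so $m_{\{0\}}=1$. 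Deleting $H_1$ gives two concurrent lines with determinant $(1-w_2^2)^2(1-w_3^2)^2$; restricting to $H_1$ gives a single point-hyperplane, whose restricted weight in your bookkeeping is $w_2w_3$ and whose determinant is $1-w_2^2w_3^2$; and $r(\mathcal{A}^{H_1})=2$. Your recursion therefore predicts $(1-w_1^2)^2(1-w_2^2)^2(1-w_3^2)^2(1-w_2^2w_3^2)$, which has the wrong last factor: the hyperplane weight $w_{e_0}$ does \emph{not} ``combine correctly'' with $(1-w_{e_0}^2)^{\kappa}$ to turn $1-(w_F/w_{e_0})^2$ into $1-w_F^2$, since these polynomials are simply not multiples of each other. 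This is the core gap you flagged yourself, and it is fatal rather than merely delicate: there is no choice of elementary block row/column operations that simultaneously reduces $B^{++}$, $B^{--}$ (whose sizes do not even match the deletion and restriction) to those matrices and produces a clean $(1-w_{e_0}^2)^{\kappa}$ factor. The cone-recursion that the paper (following Gente and Aguiar--Mahajan, and, for OMs/COMs, Hochst\"attler--Welker) actually uses replaces ``deletion and restriction'' by ``the two topcone matrices plus an extra factor $\det\mathcal{M}^{e}$,'' and then identifies that extra factor via the M\"obius-theoretic analysis of \ref{cor:crucial} and \ref{thm:moebius}; that is where the nontrivial factors such as $1-w_1^2w_2^2w_3^2$ enter.
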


After the original proof of Varchenko there were several approaches to
provide cleaner proofs of this result. Denham and
Henlon~\cite{DH} sketched an elegant alternative way to prove
the result. Gente~\cite{G} provided some more details for that
proof and  generalized their approach to cones, which are
also called topcones or in our notation supertopes, i.e.\ convex sets
of regions. This method was generalized by Hochst\"attler and
Welker~\cite{WH} to oriented matroids, which form a
combinatorial model for hyperplane arrangements reflecting their
local linear structure but allowing for some global non-linearities. 
Aguiar and Mahajan~\cite{AM} generalized the original proof of
Varchenko to a signed version of the matrix and also derived
the result for topcones. In the signed case one considers an oriented hyperplane arrangement and the entries of the signed Varchenko matrix depend on which side of a hyperplane a cell lies, see \ref{def:signed}.   Randriamaro~\cite{R1} generalized their proof to oriented matroids and proved the factorization formula also for topoplane arrangements \cite{R2}. In his habilitation thesis he furthermore showed very recently that topoplane arrangements form a topological representation for {\em complexes of
  oriented matroids} (COMs) \cite{R3}, leading to our result from a different perspective. 
Note that Varchenko and Brylawski~\cite{BV} already generalized another, simpler bilinear form from hyperplane arrangements to matroids.

Bandelt et al.\ generalized oriented matroids to COMs by relaxing the global symmetry while maintaining
convexity and local symmetry. This framework captures a variety of
classes beyond oriented matroids, e.g., distributive lattices,
CAT(0)-cube complexes, lopsided sets, linear extension graphs, and
affine oriented matroids, see~\cite{BCK}.
Hochstättler and Welker proved the factorization
  formula not only for the full oriented matroid but also for
  supertopes, i.e.\ topal fibers in oriented matroids.  Every
  supertope is a COM and it has even been conjectured in
  \cite[Conjecture 1]{BCK} and~\cite[Conjecture 1]{KM} that any COM
  can be realized as a supertope of an oriented matroid. Also see~\ref{prob:supertopes}. However,
  that conjecture seems out of reach at the moment.  A big part of the  motivation for the work in the present paper is to support that
  conjecture by explicitly proving the factorization formula for general 
  COMs.  The presentation as
well as the proof follow the lines of Hochst\"attler and
Welker~\cite{WH}.  We furthermore achieve a generalization to the signed version of the Varchenko matrix, thus generalizing
Randriamaro~\cite{R1}.

The paper is organized as follows. In \ref{sec:2} we introduce
the considered structures. In \ref{sec:3} we present some 
tools from algebraic topology that we need for the proof of the main
theorem. The latter is presented in \ref{sec:4}. \new{We give some examples and applications in~\ref{sec:applications} and conclude the paper with some further remarks in~\ref{sec:conclusions}.}

\section{The Varchenko Determinant and Complexes of Oriented Matroids} \label{sec:2}
Before we introduce the Varchenko Determinant, we need to get familiar
with COMs. COMs have been introduced
in~\cite{BCK} as a common generalization of oriented matroids, affine
oriented matroids, and lopsided sets. We will use the notation from \cite{BCK} and \cite{thebook}. Note that the symbols $+,-$ and $0$ act like $1,-1$ and $0$ when it comes to negation and multiplication. We start with the following
definitions and axioms.

\begin{Definition} 
\new{We consider \emph{sign vectors} on a finite \emph{ground set} $E$, i.e., elements of $\{0,+,-\}^E$.} The \emph{composition} of two sign vectors $X$ and $Y$ is defined as the sign-vector

$$(X \circ Y)_e = \begin{cases}
                        X_e & \text{ if } X_e \neq 0,\\
                        Y_e & \text{ if } X_e = 0\\
                       \end{cases} 
                    \forall e\in E.$$

\noindent \new{The \emph{reorientation} of $X$ with respect to $A\subseteq E$ is defined as the sign-vector}
$$_AX_e =  \begin{cases}
                        -X_e & \text{ if } e\in A,\\
                        X_e & \text{ if } e\notin A\\
                       \end{cases} 
                    \forall e\in E.$$

\noindent The \emph{separator} of $X$ and $Y$ is defined as
\begin{align*}
S(X,Y) = \{e \in E: X_e = -Y_e \neq 0\}.
\end{align*}
The \emph{support} of $X$ is defined as
\begin{align*}
\underline{X} = \{e \in E: X_e \neq  0\}.
\end{align*}
The \emph{zero-set} of $X$ is defined as
\begin{align*}
z(X) = E\backslash\underline{X} = \{e \in E: X_e =  0\}.
\end{align*}

\end{Definition}

\noindent \new{For a set $\mathcal{L} \subseteq \{0,+,-\}^E$ we introduce five axioms:}
\begin{description}
 
\item[(FS)]\emph{Face Symmetry} 
\begin{align*}
\forall X,Y \in \mathcal{L}: X \circ (-Y) \in \mathcal{L}.
\end{align*}
\item[(SE)]\emph{Strong Elimination} 
\begin{align*}
&\forall X,Y \in \mathcal{L}\, \forall e \in S(X,Y)\, \exists Z \in \mathcal{L}: \\
&Z_e=0 \text{ and }\forall f \in E \setminus S(X,Y): Z_f = (X \circ Y)_f.
\end{align*} 

\item[(C)]\emph{Composition} 
\begin{align*}
\forall X,Y \in \mathcal{L}: X \circ Y \in \mathcal{L}.
\end{align*}

\item[(Z)]\emph{Zero} 
\begin{align*}
\text{The all zeros vector } \mathbf{0} \in \mathcal{L}.
\end{align*}

\item[(Sym)]\emph{Symmetry} 
\begin{align*}
\forall X\in \mathcal{L}: -X\in \mathcal{L}.
\end{align*}

\end{description}
Now we can define the term COM.
\begin{Definition}[Complex of Oriented Matroids (COM)] Let $E$ be a finite set and $\mathcal{L} \subseteq \{0,+,-\}^E$. The pair $\M=(E,\mathcal{L})$ is called a COM, if $\mathcal{L}$ satisfies (FS) and (SE). The elements of $\mathcal{L}$ are called \emph{covectors}.
\end{Definition}

\new{Let us first present OMs as special COMs.}

\begin{Definition}[Oriented Matroid (OM)] Let $E$ be a finite set and $\mathcal{L} \subseteq \{0,+,-\}^E$. The pair $\M=(E,\mathcal{L})$ is called an OM, if it is a COM that satisfies (Z).
\end{Definition}

\begin{Remark}\label{rem:OMs}
\new{Usually OMs are defined satisfying (C),(Sym),(SE). But note that (FS) implies (C). Indeed, by (FS) we first get $X\circ -Y\in\mathcal{L}$ and then $X\circ Y= (X\circ -X)\circ Y= X\circ -(X\circ -Y) \in \mathcal{L}$ for all $X,Y\in \mathcal{L}$. Further, (Z) together with (FS) clearly implies (Sym). Conversely, (Sym) and (C) imply (FS) while (Sym) and (SE) imply (Z).}
\end{Remark}

Let $\M=(E,\mathcal{L})$ be a COM. In the following we assume that $\M=(E,\mathcal{L})$ is \emph{simple}, i.e.\ 
\[\forall e \in E: \{X_e \mid X \in \mathcal{L}\} = \{+,-,0\}\quad \text{ and }\quad \forall e \neq f \in E: \{X_e X_f \mid X \in \mathcal{L}\} = \{+,-,0\}.\] In this setting the sign-vectors in $\mathcal{L}$ of full support are called \emph{topes} and their collection is denoted by $\Tscr$. 

The \emph{restriction} of a sign-vector $X\in\{0,+,-\}^E$ to $E \backslash A$, $A \subseteq E$, denoted by $X\backslash A \in \{0,+,-\}^{E\backslash A}$, is defined by $(X \backslash A)_e = X_e$ for all $e \in E \backslash A$. We also write $X|_{E\backslash A}$. The \emph{deletion} of a COM is defined by $(E \backslash A, \mathcal{L} \backslash A)$, where $\mathcal{L} \backslash A = \{X\backslash A,\; X \in \mathcal{L}\}$, also written as $\mathcal{L}|_{E \backslash A}$.

Let $\M=(E,\mathcal{L})$ be a COM and $S^+,S^- \subseteq E$ such that there exists a tope  $T\in\Tscr$ where $S^+$ is a subset of the positive elements of $T$ and $S^-$ is part of the negative elements of $T$. The \emph{topal fiber} $\rho_{(S^+,S^-)}(\Lscr)$ is defined by the covectors $\{X \mid X\in \mathcal{L},  X_e=+ \,\forall\, e\in S^{+}, X_e=- \,\forall\, e\in S^{-}\}$. We denote by $\Tscr(S^+,S^-)$ the set of topes of $\rho_{(S^+,S^-)}(\Lscr)$.
Since all covectors of a topal fiber have the same entries on $S^+ \cup S^-$, we usually suppress the redundant coordinates in $S^+ \cup S^-$, to obtain a simple COM on the groundset $E \backslash (S^+ \cup S^-)$. We will make use of the fact (shown in~\cite{BCK}) that the class of simple COMs is closed under deletion and under taking topal fibers.

For a covector $X\in \mathcal{L}$, the set $F(X)=\{X\circ Y\mid Y\in\mathcal{L}\}$ is usually called the \emph{face} of $X$. We define $\cstar(X) = \{T \in \Tscr\,|\,X \leq T\}$, where the componentwise ordering with respect to $0<+,-$ is used. Note that $\cstar(X)\backslash
  \underline{X}$ is the set of topes of $(E\backslash
  \underline{X}, F(X)\backslash \underline{X})$, which is well-known and easily seen to be an oriented matroid.

Let us look at a special OM which we will need in the next chapter. 
\begin{Definition}[Graphic OM of a directed $n$-cycle]\label{def:cycle}
This OM has a ground set $E$ of size $n$ and its set of covectors $\mathcal{C}_n$ consists of $\mathbf{0}$ and all compositions of sign-vectors from $\{0,+,-\}^E$ with exactly one positive and exactly one negative entry. Those generating sign-vectors are called the cocircuits of $\mathcal{C}_n$.
\end{Definition}

It can easily be checked that $\mathcal{C}_n$ is the set of covectors of an OM. We use $\mathcal{C}_3$ as an example:


\begin{Example}[Graphic OM of a directed triangle] \label{ex:cycle} We look at a digraph with three vertices which just consists of a directed cycle, i.e.
 \begin{center}
 \includegraphics[scale=0.5]{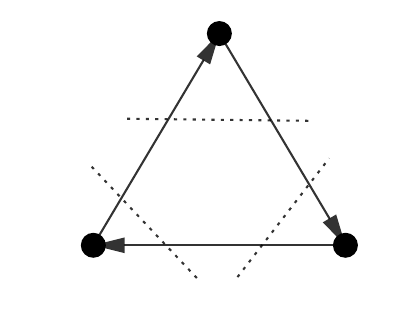}
 \end{center}
The ground set $E$ of this OM corresponds to the three arcs. One gets the covectors of such an OM by looking at the sign vectors of directed cuts (indicated with dotted lines). These sign vectors are $(+,-,0)$, $(-,+,0)$, $(+,0,-)$, $(-,0,+)$, $(0,+,-)$ and $(0,-,+)$. Their compositions additionally yield the covectors $(+,+,-)$, $(+,-,+)$, $(-,+,+)$, $(-,-,+)$, $(-,+,-)$ and $(+,-,-)$. We see that $\Tscr$ consists of all full support sign vectors, except $(+,+,+)$ and $(-,-,-)$.  

\end{Example}

We define the signed Varchenko matrix for COMs analogously to this matrix for hyperplane arrangements in \cite{AM}. For this purpose we introduce two variables $x_{e}^+$, $x_{e}^-$ for each element $e\in E$. Let $\mathbb{K}$ be a field and let $\mathbb{K}[x_{e}^*\,|\,*\in \{+,-\}, e\in E]$ the polynomial ring in the set of variables $x_{e}^*$, $*\in \{+,-\}$, $e\in E$.

\begin{Definition}[Signed Varchenko Matrix of a COM]\label{def:signed} Let $\M=(E,\mathcal{L})$ be a COM. The signed Varchenko matrix $\Var$ of $\mathcal{M}$ is defined by a $\# \Tscr \times \# \Tscr$-Matrix over 
\begin{align*}
\mathbb{K}[x_{e}^*\,|\,*\in \{+,-\}, e\in E].
\end{align*}
Its rows and columns are indexed by the topes $\Tscr$ in a fixed linear order. For $P,Q \in \Tscr$
\begin{align*}
\Var_{P,Q} = \prod_{e \in S(P,Q)} x_{e}^{P_e}.
\end{align*}
\end{Definition} 
Note that the diagonal entries $\Var_{P,P}$ of the matrix are equal to 1. \new{Let us illustrate this definition with the graphic OM of a directed triangle.}

\begin{Example}[continued]
\new{The signed Varchenko Matrix of the graphic OM of a directed triangle is
\begin{align*}
\Var = \left(\begin{array}{cccccc}
1&x_2^+ x_3^-&x_1^+ x_3^-&x_1^+ x_2^+ x_3^-&x_1^+&x_2^+\\
x_2^- x_3^+&1&x_1^+ x_2^-&x_1^+&x_1^+ x_2^- x_3^+&x_3^+\\
x_1^- x_3^+&x_1^- x_2^+&1&x_2^+&x_3^+&x_1^- x_2^+ x_3^+\\
x_1^- x_2^- x_3^+&x_1^-&x_2^-&1&x_2^- x_3^+&x_1^- x_3^+\\
x_1^- &x_1^- x_2^+ x_3^-&x_3^-&x_2^+ x_3^-&1&x_1^- x_2^+\\
x_2^-&x_3^-&x_1^+ x_2^- x_3^-&x_1^+ x_3^-&x_1^+ x_2^-&1
\end{array} \right)
\end{align*}}

\end{Example}

In this work we will prove the following theorem.

\begin{Theorem}
   \label{thm:varchenko}
   Let $\Var$ be the signed Varchenko matrix of the COM $\M=(E,\mathcal{L})$.
   Then
   \begin{align*}
     \det (\Var) = \prod_{Y \in \mathcal{L}} (1-a(Y))^{b_Y}. 
   \end{align*}
   where $a(Y) := \prod_{e \in z(Y)} x_{e}^+ x_{e}^-$ and $b_Y$ are nonnegative integers \new{that can be explicitly computed, see  \ref{note:bY}.}
\end{Theorem}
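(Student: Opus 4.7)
The plan is to prove \ref{thm:varchenko} by induction on $|E|$, via a deletion--contraction argument on an element $e\in E$, in the spirit of Hochst\"attler--Welker adapted to the signed variables. The base case $|E|=0$ is trivial: $\Var=(1)$ and the right-hand side is an empty product.

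For the inductive step I fix $e\in E$ and partition $\mathcal{T}=\mathcal{T}^+\sqcup\mathcal{T}^-$ according to the value at $e$. I call a tope $T$ \emph{$e$-free} if its coordinate-flip at $e$ also lies in $\mathcal{T}$; such topes come in flip pairs $(T^+,T^-)$, and by (SE) these pairs biject with the topes of the contraction $\mathcal{L}/e:=\{Y|_{E\setminus\{e\}}:Y\in\mathcal{L},\,Y_e=0\}$, which is itself a COM on $E\setminus\{e\}$ (\cite{BCK}). Comparing separators yields the two key identities
\[
\Var_{T^-,Q}=x_{e^-}\Var_{T^+,Q}\ \ (Q_e=+),\qquad \Var_{T^+,Q}=x_{e^+}\Var_{T^-,Q}\ \ (Q_e=-),
\]
since $S(T^+,Q)$ and $S(T^-,Q)$ differ only in whether $e$ is included. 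Consequently the row operation $R_{T^-}\leftarrow R_{T^-}-x_{e^-}R_{T^+}$ annihilates the $\mathcal{T}^+$-entries of $R_{T^-}$ and rescales its $\mathcal{T}^-$-entries by $(1-x_{e^+}x_{e^-})$.

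After performing these operations for every flip pair and combining them with the symmetric column operations, one extracts a factor of $(1-x_{e^+}x_{e^-})^{|\mathcal{T}(\mathcal{L}/e)|}$ and reduces the problem to the determinant of a block matrix whose diagonal blocks are the Varchenko matrix of the deletion $\mathcal{L}\setminus e$ and the Varchenko matrix of the contraction $\mathcal{L}/e$ \emph{twisted} by the extra zero at $e$: each contraction-covector's $a$-value picks up an additional factor of $x_{e^+}x_{e^-}$. Applying the induction hypothesis to $\mathcal{L}\setminus e$ and $\mathcal{L}/e$ then reconstructs the two families of covectors in the final product: those with $e\notin z(Y)$ arise from the deletion (with the same $a$-value), and those with $e\in z(Y)$ arise from the contraction (with $a(Y)=x_{e^+}x_{e^-}\cdot a(Y|_{E\setminus\{e\}})$ matching the definition).

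The main obstacle is the asymmetry introduced by the \emph{non-free topes}: not every tope need have an $e$-flip, and these unmatched topes prevent the clean Kronecker-style decomposition of the Varchenko matrix into flip and contraction parts. Handling them requires a careful Schur-complement argument showing that, after the row/column operations above, the residual matrix is still the Varchenko matrix of $\mathcal{L}\setminus e$; this uses (FS) and (SE) essentially, together with the topological tools of \ref{sec:3} (identifying the multiplicities $b_Y$ as ranks of boundary maps in a chain complex on the COM, hence nonnegative integers). This is exactly where the generalization from OMs to COMs, and from unsigned to signed variables, must be executed with care.
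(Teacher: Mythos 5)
Your route is a genuinely different one from the paper's, but it has a real gap exactly where you flag the ``main obstacle,'' and the claim you need there is not correct as stated.

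The paper does not use deletion--contraction. It proves the factorization $\Var = \mathcal{M}^{e_1}\cdots\mathcal{M}^{e_r}$ (\ref{pro:fac}), where each $\mathcal{M}^e$ has entries $-\mu((\hat{0},Q)_{R,e})\Var_{Q,R}$ built from M\"obius numbers of tope posets, and the base step $\Var = \Var_{x_e=0}\cdot\mathcal{M}^e$ (\ref{lem:fac}) splits $\Var_{x_e=0}$ into the two diagonal blocks $\Var^{e,(-,-)}$ and $\Var^{e,(+,+)}$, which are the Varchenko matrices of the \emph{topal fibers} $\rho_{(\emptyset,\{e\})}(\mathcal{L})$ and $\rho_{(\{e\},\emptyset)}(\mathcal{L})$ --- not of the deletion $\mathcal{L}\setminus e$. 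The multiplicities $b_Y$ then come out of \ref{thm:moebius}, \ref{pro:contraction} and \ref{lem:block}, as M\"obius numbers of contractible intervals in $\mathcal{T}_{R,e}$, not as ranks of boundary maps.

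The gap: your row operations $R_{T^-}\leftarrow R_{T^-}-x_{e^-}R_{T^+}$ only apply to $e$-free flip pairs. Any tope $T$ with $T_e=-$ whose $e$-flip is \emph{not} a tope leaves a row that is untouched and still has nonzero entries in every $\mathcal{T}^+$-column, so the matrix is not block triangular after your operations, and the symmetric column operations do not fix this (they kill the $\mathcal{T}^+$-rows in flip-$T^-$-columns, not the non-free rows). You can already see this in $\mathcal{C}_3$ with $e=1$: the topes $(+,-,-)$ and $(-,+,+)$ have no $1$-flip, and after your operations their rows/columns remain dense. Consequently the assertion that ``the residual matrix is still the Varchenko matrix of $\mathcal{L}\setminus e$'' is false in general --- indeed the block that does emerge cleanly (the unchanged $\mathcal{T}(\{e\},\emptyset)\times\mathcal{T}(\{e\},\emptyset)$ block) is the Varchenko matrix of the topal fiber $\rho_{(\{e\},\emptyset)}(\mathcal{L})$, which is a strict sub-COM of $\mathcal{L}\setminus e$ whenever there exist non-free topes with $T_e=-$. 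The paper's way to handle exactly these non-free topes is to replace the $\pm 1$ entries of your elementary operations by M\"obius numbers $\mu((\hat0,Q)_{R,e})$ and to invoke contractibility of tope posets of topal fibers (\ref{thm:supertope} and \ref{cor:crucial}) so that the required cancellations hold; a simple Schur complement does not suffice, and the multiplicity of the factor $(1-x_{e^+}x_{e^-})$ is in general not $|\mathcal{T}(\mathcal{L}/e)|$ but is distributed across several covectors $Y$ with $e\in z(Y)$ via the combinatorics of \ref{lem:block}. If you want to keep a deletion--contraction style argument, you would in effect have to reprove the paper's \ref{cor:crucial} and \ref{thm:moebius}, at which point you are doing the paper's proof.
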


\begin{Example}[continued]
\new{For our example the determinant of the signed Varchenko matrix factorizes to
\begin{align*}
\det (\Var) = (1 - x_1^+ x_1^-)^2 (1 - x_2^+ x_2^-)^2 (1 - x_3^+ x_3^-)^2 (1 - x_1^+ x_1^- x_2^+ x_2^- x_3^+ x_3^-).
\end{align*}}
\end{Example}

A corollary of this result, namely the case where $x_{e}^-=x_{e}^+$, which is the original version of the Varchenko matrix, has been already proven for OMs in \cite{WH}. We formulate it for COMs.

\begin{Corollary}\label{cor:varchenko}
Let $\mathbf{V}$ be the (unsigned) Varchenko matrix (i.e. $x_{e}^-=x_{e}^+=x_e$) of the COM $\M=(E,\mathcal{L})$.
   Then
   \begin{align*}
     \det (\mathbf{V}) = \prod_{Y \in \mathcal{L}} (1-c(Y)^2)^{b_Y}. 
   \end{align*}
   where $c(Y) := \prod_{e \in z(Y)}x_{e}$ and $b_Y$ are nonnegative integers.
\end{Corollary}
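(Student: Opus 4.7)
The plan is to deduce the Corollary from \ref{thm:varchenko} purely by specializing variables, since setting $x_{e^+} = x_{e^-} = x_e$ turns the signed Varchenko matrix into the unsigned one. Concretely, I would consider the $\K$-algebra homomorphism
\[
\varphi \colon \K[x_{e^*} \mid * \in \{+,-\}, e \in E] \longrightarrow \K[x_e \mid e \in E], \qquad \varphi(x_{e^+}) = \varphi(x_{e^-}) = x_e.
\]

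Next I would verify that $\mathbf{V}$ is the entrywise image of $\Var$ under $\varphi$. This is immediate from \ref{def:signed}: the factor $x_{e^{P_e}}$ in $\Var_{P,Q}$ is sent to $x_e$ regardless of the sign $P_e$, so $\varphi(\Var_{P,Q}) = \prod_{e \in S(P,Q)} x_e = \mathbf{V}_{P,Q}$. Since the determinant is a polynomial in the matrix entries, it commutes with $\varphi$ applied entrywise, giving $\det(\mathbf{V}) = \varphi(\det(\Var))$.

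Finally I would plug in the factorization from \ref{thm:varchenko} and compute, for every covector $Y$,
\[
\varphi(a(Y)) = \prod_{e \in z(Y)} \varphi(x_{e^+}) \varphi(x_{e^-}) = \prod_{e \in z(Y)} x_e^2 = c(Y)^2,
\]
so that
\[
\det(\mathbf{V}) = \varphi\!\left( \prod_{Y \in \mathcal{L}} (1-a(Y))^{b_Y} \right) = \prod_{Y \in \mathcal{L}} (1-c(Y)^2)^{b_Y}
\]
with the same nonnegative exponents $b_Y$. There is no real obstacle here: this is a direct specialization argument once \ref{thm:varchenko} is available, and the invariance of the exponents $b_Y$ is automatic because we are simply substituting values into an identity that already holds in the larger polynomial ring.
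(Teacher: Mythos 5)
Your specialization argument is correct and is exactly how the paper treats \ref{cor:varchenko}: the corollary is stated as an immediate consequence of \ref{thm:varchenko} obtained by setting $x_{e^+}=x_{e^-}=x_e$, and the paper gives no separate proof. Your explicit use of the ring homomorphism $\varphi$ and the observation that $\varphi(a(Y))=c(Y)^2$ just spells out that substitution cleanly.
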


\begin{Example}[continued]
\new{For our example the determinant of the (unsigned) Varchenko matrix factorizes to
\begin{align*}
\det (\Var) = (1 - x_1^2)^2 (1 - x_2^2)^2 (1 - x_3^2)^2 (1 - x_1^2 x_2^2 x_3^2).
\end{align*}}
\end{Example}

\section{Preparation}\label{sec:3}

We start with some basics about partially ordered sets $\mathcal{P}$ (\emph{posets}). For an introduction we recommend \cite{W}. One can associate an abstract simplicial complex $\Delta(\mathcal{P})$, called \emph{order complex}, to every poset. The elements of $\mathcal{P}$ are the vertices of this complex and the chains (i.e. totally ordered subsets) the faces. \new{Two posets are \emph{homotopy equivalent} if their order complexes are homotopy equivalent.} A poset is called \emph{contractible} if its order complex is homotopy equivalent to a point. Clearly a poset is contractible if it has a unique minimal or a unique maximal element, since this element is contained in every maximal chain and consequently in every maximal face of the order complex. For details see \cite{W}. We introduce now the \emph{M\"obius function} $\mu$ of a poset:
\begin{align*}
&\mu(x,x) = 1\text{ for all }x \in \mathcal{P}\\
&\mu(x,y) = -\sum_{x \leq z <y} \mu(x,z)\text{ for all }x <y \in \mathcal{P}.  
\end{align*}

The \emph{bounded extension} $\hat{\mathcal{P}}$ of a poset is the poset together with a new maximal element $\hat{1}$ and a new minimal element $\hat{0}$. 
The \emph{M\"obius number} of $\mathcal{P}$ is defined by

\begin{align*}
\mu(\mathcal{P}) = \mu(\hat{0},\hat{1}), 
\end{align*}
where the right-hand-side is evaluated in $\hat{\mathcal{P}}$.
\begin{Example}\label{ex:point}
Let us look at the poset $\mathcal{P}$ which consists only of one element. In the following its bounded extension and the value of the M\"obius function of the elements of the bounded extension are depicted.
\begin{center}
\includegraphics[scale=0.5]{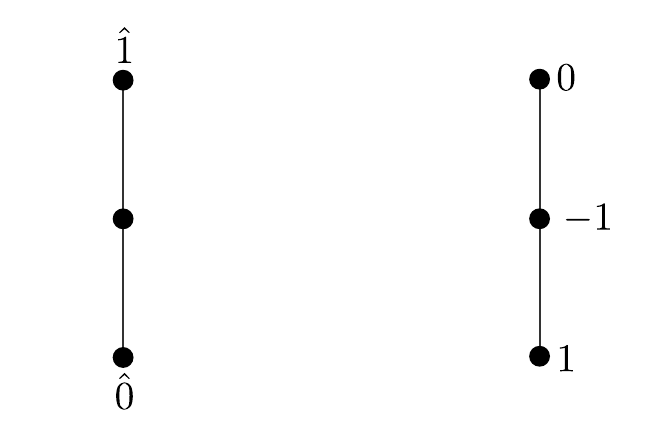}
\end{center}
Hence, the M\"obius number of the poset consisting of only one element is
\begin{align*}
\mu(\mathcal{P}) = \mu(\hat{0},\hat{1}) = 1 + (-1) = 0.
\end{align*}
\end{Example}

It follows from the following fact that the M\"obius number is a topological invariant with respect to homotopic equivalence.
\begin{Theorem}\cite[Philip Hall Theorem]{W} The M\"obius number of a poset equals the reduced Euler characteristic of its order complex, i.e.
\begin{align*}
\mu(\mathcal{P}) = \chi(\Delta(\mathcal{P}))-1.
\end{align*}
\end{Theorem}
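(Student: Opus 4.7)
The plan is to establish Philip Hall's formula expressing $\mu(x,y)$ as an alternating sum over chains in the interval $[x,y]$, and then read off the reduced Euler characteristic of $\Delta(\mathcal{P})$ by specializing to the interval $[\hat{0},\hat{1}]$ of the bounded extension $\hat{\mathcal{P}}$. Concretely, for any $x\le y$ in a poset, write $c_k(x,y)$ for the number of strict chains $x=z_0<z_1<\cdots<z_k=y$ of length $k$. I claim
\[
\mu(x,y) \;=\; \sum_{k\ge 0} (-1)^{k}\, c_k(x,y),
\]
and the theorem follows by applying this identity to $(\hat{0},\hat{1})$ in $\hat{\mathcal{P}}$.

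First I would prove this chain formula by induction on the length of the longest chain in $[x,y]$. The base case $x=y$ is trivial since $c_0(x,x)=1$ and $\mu(x,x)=1$. For the inductive step, use the defining recursion
\[
\mu(x,y) \;=\; -\!\!\sum_{x\le z<y}\mu(x,z),
\]
apply the induction hypothesis to each $\mu(x,z)$ with $z<y$, and then reorganize the double sum: a $k$-chain from $x$ to $z$ together with the covering step $z<y$ is the same data as a $(k{+}1)$-chain from $x$ to $y$ with penultimate element $z$. Summing over $z<y$ gives $\sum_{k\ge 0}(-1)^{k+1}c_{k+1}(x,y)$, and adding the missing $k=0$ term (which contributes $+1$ from $c_0(x,y)=0$ when $x<y$, corrected by noting $c_0(x,y)=0$ and the recursion already starts at $k\ge 1$) yields precisely $\sum_{k\ge 0}(-1)^k c_k(x,y)$; the bookkeeping of the initial term is the main thing to handle carefully.

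Next I specialize to $\hat{\mathcal{P}}$. A chain $\hat{0}=z_0<z_1<\cdots<z_k=\hat{1}$ of length $k$ in $\hat{\mathcal{P}}$ corresponds bijectively, for $k\ge 1$, to a chain $z_1<\cdots<z_{k-1}$ of $k-1$ elements of $\mathcal{P}$ (an ordinary chain in $\mathcal{P}$ with $k-1$ vertices, i.e.\ a $(k-2)$-dimensional face of $\Delta(\mathcal{P})$). Letting $f_i$ denote the number of $i$-faces of $\Delta(\mathcal{P})$ (so $f_i$ counts chains of $i+1$ elements of $\mathcal{P}$), the identity specializes to
\[
\mu(\mathcal{P})=\mu(\hat{0},\hat{1})
= \sum_{k\ge 1}(-1)^{k}c_k(\hat{0},\hat{1})
= -1 + \sum_{i\ge 0}(-1)^{i}f_i
= \chi(\Delta(\mathcal{P})) - 1,
\]
where the $-1$ corresponds to the case $k=1$ (the chain $\hat{0}<\hat{1}$ with no elements of $\mathcal{P}$ in between), and the remaining sum is exactly $\chi(\Delta(\mathcal{P}))$.

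The potential obstacle is purely combinatorial accounting: matching the index shift between chain-length in $\hat{\mathcal{P}}$, number of vertices of a face of $\Delta(\mathcal{P})$, and dimension of that face, together with handling the empty chain consistently. Once Hall's chain formula is established by the inductive argument above, the specialization is mechanical and produces the claimed identity.
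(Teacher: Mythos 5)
The paper does not prove this statement; it cites it directly from Wachs~\cite{W}, so there is no in-paper proof to compare against. Your argument is the standard textbook proof of Philip Hall's theorem (the chain-counting formula $\mu(x,y)=\sum_{k\ge 0}(-1)^k c_k(x,y)$, established by induction from the defining recursion, then specialized to $(\hat 0,\hat 1)$ in $\hat{\mathcal P}$), and it is correct. The well-foundedness of the induction is fine: for $z<y$, the longest chain in $[x,z]$ is strictly shorter than in $[x,y]$ since appending $y$ lengthens it. The reorganization $\sum_{x\le z<y} c_k(x,z)=c_{k+1}(x,y)$ is exactly right, and the final index shift $c_k(\hat 0,\hat 1)=f_{k-2}$ (with $f_{-1}=1$ for the empty face) gives the reduced Euler characteristic cleanly.

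Two small wording slips worth fixing, neither of which affects correctness. First, in the inductive step you refer to ``the covering step $z<y$''; the relation $z<y$ is arbitrary, not necessarily a cover, and the argument does not need it to be one. Second, the parenthetical ``which contributes $+1$ from $c_0(x,y)=0$'' is self-contradictory as written; what you mean is simply that $c_0(x,y)=0$ for $x<y$, so the sum $\sum_{j\ge 1}(-1)^j c_j(x,y)$ obtained from the recursion already equals $\sum_{j\ge 0}(-1)^j c_j(x,y)$ and no correction term is needed. State it that way and the bookkeeping concern you flagged disappears.
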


In particular we get the following corollary, whose second part follows from the definition of contractability and \ref{ex:point}.

\begin{Corollary}
  \label{cor:moebius}
  For two homotopy equivalent posets $\mathcal{P}$ and $\mathcal{Q}$ we have
  $\mu(\mathcal{P}) = \mu(\mathcal{Q})$. In particular, if $\mathcal{P}$ is contractible then
  $\mu(\mathcal{P}) = 0$. 
\end{Corollary}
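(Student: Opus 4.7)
The plan is to reduce the statement directly to the preceding Philip Hall theorem, which expresses $\mu(\mathcal{P})$ as $\chi(\Delta(\mathcal{P}))-1$. The only additional ingredient is the classical topological fact that the Euler characteristic of a finite simplicial complex is invariant under homotopy equivalence; I would cite this as a standard result rather than reprove it.

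The first step is to apply Philip Hall to both posets, obtaining
\[
\mu(\mathcal{P}) \;=\; \chi(\Delta(\mathcal{P}))-1 \qquad\text{and}\qquad \mu(\mathcal{Q}) \;=\; \chi(\Delta(\mathcal{Q}))-1.
\]
Since the text defines homotopy equivalence of posets through their order complexes, by hypothesis $\Delta(\mathcal{P})$ and $\Delta(\mathcal{Q})$ are homotopy equivalent as topological spaces, hence share the same Euler characteristic. Subtracting $1$ from both sides yields the desired equality $\mu(\mathcal{P})=\mu(\mathcal{Q})$.

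For the ``in particular'' assertion, if $\mathcal{P}$ is contractible then, by definition, $\Delta(\mathcal{P})$ is homotopy equivalent to a one-point space. Since a single point has Euler characteristic $1$, Philip Hall gives $\mu(\mathcal{P}) = 1-1 = 0$ immediately. Equivalently, one may combine the first assertion with \ref{ex:point}, where the same value $\mu(\{*\}) = 0$ is computed by hand.

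There is no real obstacle here: the corollary is a one-line consequence of Philip Hall together with the homotopy invariance of $\chi$. The only minor bookkeeping concerns the ``$-1$'' shift in Philip Hall (which can be read as comparing unreduced and reduced Euler characteristic); but since the same shift appears on both sides of the claimed equality, it cancels, and no subtle issue arises.
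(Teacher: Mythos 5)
Your proof is correct and matches the paper's own reasoning: both deduce the first claim from Philip Hall plus the homotopy invariance of the Euler characteristic, and both obtain the contractible case either directly or via the one-point computation in \ref{ex:point}. Nothing further is needed.
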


For a poset $\mathcal{P}$ and $p \in \mathcal{P}$ we denote by $\mathcal{P}_{\leq p}$ the subposet $\{ q \in \mathcal{P}~|~q \leq p\}$. 

\begin{Proposition}\cite[Quillen Fiber Lemma]{Q}
   \label{lem:quillen}
   Let $\Pscr$ and $\Qscr$ be posets and $f : \Pscr \rightarrow \Qscr$
   \new{order preserving}.  If for all $q \in Q$ we have that $f^{-1} (\Qscr_{\leq
     q})$ is contractible, then $\Pscr$ and $\Qscr$ are homotopy
   equivalent.
\end{Proposition}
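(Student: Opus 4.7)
The plan is to establish \ref{lem:quillen} via the classical mapping-cylinder construction in the category of posets. Define a new poset $\Mscr(f)$ whose ground set is the disjoint union $\Pscr \sqcup \Qscr$, and whose order extends the given orders on $\Pscr$ and on $\Qscr$ by declaring $p \leq_{\Mscr} q$ whenever $p \in \Pscr$, $q \in \Qscr$, and $f(p) \leq q$ in $\Qscr$. The strategy is to exhibit homotopy equivalences $\Pscr \simeq \Mscr(f) \simeq \Qscr$, from which the conclusion follows by transitivity.

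For the direction $\Qscr \simeq \Mscr(f)$, consider the poset retraction $r \colon \Mscr(f) \to \Qscr$ with $r|_\Qscr = \mathrm{id}_\Qscr$ and $r|_\Pscr = f$. The order on $\Mscr(f)$ is set up precisely so that $x \leq_{\Mscr} r(x)$ for every $x \in \Mscr(f)$. Invoking the standard fact that two poset maps which are pointwise comparable induce homotopic maps on order complexes, one obtains $\iota_\Qscr \circ r \simeq \mathrm{id}_{\Mscr(f)}$, while $r \circ \iota_\Qscr = \mathrm{id}_\Qscr$ holds on the nose; hence the inclusion $\iota_\Qscr$ is a homotopy equivalence.

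For the direction $\Pscr \simeq \Mscr(f)$, the fiber hypothesis finally enters. I would argue by induction on $\#\Qscr$, removing a maximal element $q$ of $\Qscr$ at each step. The subposet $\Mscr(f)_{\leq q} = \{q\} \cup \Qscr_{<q} \cup f^{-1}(\Qscr_{\leq q})$ has $q$ as its unique maximum, so it is contractible. The open interval $\Mscr(f)_{<q} = \Qscr_{<q} \cup f^{-1}(\Qscr_{\leq q})$ is, by the inductive hypothesis applied to $f|_{f^{-1}(\Qscr_{\leq q})}\colon f^{-1}(\Qscr_{\leq q}) \to \Qscr_{\leq q}$ restricted to the sub-problem below $q$, homotopy equivalent to $f^{-1}(\Qscr_{\leq q})$, which is contractible by assumption. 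Since the lower link of the maximal element $q$ is contractible, deleting $q$ from $\Mscr(f)$ leaves the homotopy type unchanged. Iterating through all of $\Qscr$ exhausts the $\Qscr$-part of $\Mscr(f)$ and leaves $\Pscr$, yielding $\Pscr \simeq \Mscr(f)$.

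The main obstacle is making the deletion step fully rigorous: one must verify that removing a maximal element whose downward link is contractible genuinely preserves the homotopy type of the order complex, and that the restricted map $f|_{f^{-1}(\Qscr_{\leq q})}$ still fulfills the fiber hypothesis (which it does, since its fibers coincide with those of $f$ over the same elements of $\Qscr_{\leq q}$). Alternatively, one can bypass the induction altogether by invoking Quillen's simplicial Theorem A for the induced inclusion $\Delta(\Pscr) \hookrightarrow \Delta(\Mscr(f))$, where the hypothesis on $f^{-1}(\Qscr_{\leq q})$ supplies precisely the required contractibility of preimages of closed stars. Either route converts the combinatorial data of contractible fibers into the desired topological equivalence.
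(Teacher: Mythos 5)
The paper does not actually prove this Proposition; it is quoted as a standard theorem (going back to Quillen), with [W] cited as background. So there is no ``paper's proof'' to compare against, and your attempt must be judged on its own merits.

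Your overall strategy --- build the poset mapping cylinder $\Mscr(f)$ and show $\Pscr \simeq \Mscr(f) \simeq \Qscr$ --- is the classical one, and the $\Qscr \simeq \Mscr(f)$ half is correct: the retraction $r$ satisfies $x \leq_{\Mscr} \iota_\Qscr(r(x))$ pointwise, so the standard order-homotopy lemma gives $\iota_\Qscr \circ r \simeq \mathrm{id}$, while $r \circ \iota_\Qscr = \mathrm{id}_\Qscr$, and $\iota_\Qscr$ is an equivalence. This is where the easy half ends, though, and the $\Pscr \simeq \Mscr(f)$ half is precisely where the real content of the theorem lives.

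Your inductive argument for $\Pscr\simeq\Mscr(f)$ has a genuine gap. To delete a maximal $q\in\Qscr$ from $\Mscr(f)$ while preserving the homotopy type you correctly identify that you need the lower link $\Mscr(f)_{<q} = \Qscr_{<q}\cup f^{-1}(\Qscr_{\leq q})$ to be contractible, and you claim this follows from the inductive hypothesis applied to $f|_{f^{-1}(\Qscr_{\leq q})}\colon f^{-1}(\Qscr_{\leq q})\to\Qscr_{\leq q}$. But the mapping cylinder of that restricted map is $\Mscr(f)_{\leq q}$, which contains $q$, not $\Mscr(f)_{<q}$. And there is no well-defined poset map from $f^{-1}(\Qscr_{\leq q})$ to $\Qscr_{<q}$ whose cylinder would give $\Mscr(f)_{<q}$, because elements of $f^{-1}(q)$ have nowhere to go in $\Qscr_{<q}$. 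So $\Mscr(f)_{<q}$ is simply not an instance of the inductive setup, and the contractibility of $f^{-1}(\Qscr_{\leq q})$ does not transfer to it by the IH. (In examples the conclusion does seem to hold, but establishing it in general is essentially as hard as the fiber lemma itself: the only natural comparison maps one could use have a ``fiber over $q$'' equal to all of $\Mscr(f)_{<q}$, so any attempt to apply a fiber-type argument here is circular.) Your proposed alternative --- invoking ``simplicial Theorem A'' for the inclusion $\Pscr\hookrightarrow\Mscr(f)$ --- is likewise circular: Theorem A specialized to posets \emph{is} the Quillen fiber lemma, and the verification you sketch (fibers over $q\in\Qscr$ are $f^{-1}(\Qscr_{\leq q})$, fibers over $p\in\Pscr$ are $\Pscr_{\leq p}$) is just an application of the statement being proved. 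The known proofs of this result fill exactly the gap you are skipping, typically via a bisimplicial set / homotopy colimit argument (Quillen's original) or a more delicate nerve/carrier argument, not by naively deleting maximal elements of $\Qscr$.
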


 We will now associate posets with COMs, so let $\M=(E,\mathcal{L})$ be a COM and let
$R \in \{+,-\}^E$ be a fixed sign vector. We consider $\Tscr$ as a poset with order relation 
\[P \preceq_R Q\quad \text{ if }\quad S(R,P) \subseteq S(R,Q).\] We write $\Tscr_R$ if we consider
$\Tscr$ with this partial order and we call $R$ the \emph{base pattern} of the poset.

Now we will introduce a theorem which will help us with our crucial \new{\ref{cor:crucial}}.

\begin{Theorem}
  \label{thm:supertope}
Let $\M=(E,\mathcal{L})$ be a topal fiber of a COM $\M'=(E',\mathcal{L'})$, $R'\in \Tscr'$ a tope of $\M'$ and $R=R'_{|E}$ its restriction to $E$. Then the order complex of $\Tscr_{R}$ is contractible.

\end{Theorem}

\new{Note that the restriction $R$ in the statement of  \ref{thm:supertope} is not necessarily a tope of $\M$.} In order to apply the Quillen Fiber Lemma in the proof of \ref{thm:supertope}
we need the following lemma. 

\begin{Lemma}
  \label{lem:fiberisst} Let $f \in E$ and $R=\{+\}^E$. Let $\Tscr\setminus f$
  denote the set of topes of $\M \setminus f$ and
  $\Tscr\hspace{-.1cm}\setminus \hspace{-.1cm}f_{R\setminus f}$ the corresponding tope poset with base pattern $R\setminus \{f\}$. Consider the order-preserving map $\pi_{f}:\Tscr_R \to \Tscr\hspace{-.1cm}\setminus \hspace{-.1cm}f_{R\setminus f}$ given by restriction.  Let $Q \in \Tscr\hspace{-.1cm}\setminus \hspace{-.1cm}f$. Then
  \[\pi_{f}^{-1}((\Tscr\hspace{-.1cm}\setminus \hspace{-.1cm}f_{R\setminus f})_{\preceq Q})=\Tscr(Q^+,\emptyset).\]
\end{Lemma}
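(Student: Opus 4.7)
The plan is to unpack both sides of the claimed equality and observe they parametrize the same collection of topes, namely those topes of $\mathcal{L}$ whose set of positive coordinates contains $Q^+$. Everything is a direct translation of the definitions, so no serious obstacle is expected; the content is really that when $R$ is the all-positive pattern, the order $\preceq_R$ tracks containment of positive supports, and restriction to $E\setminus\{f\}$ does not affect coordinates in $Q^+ \subseteq E\setminus\{f\}$.

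First I would rewrite the partial orders. Since $R = \{+\}^E$ and $R\setminus f = \{+\}^{E\setminus f}$, the separators simplify to $S(R,P) = \{e \in E : P_e = -\}$ and $S(R\setminus f,P') = \{e \in E\setminus f : P'_e = -\}$. Writing $X^+ := \{e : X_e = +\}$ for the positive support of a sign vector $X$, the defining conditions $S(R,P) \subseteq S(R,P'')$ and $S(R\setminus f,P') \subseteq S(R\setminus f,Q)$ become $(P'')^+ \subseteq P^+$ and $Q^+ \subseteq (P')^+$, respectively. In particular,
\begin{align*}
(\Tscr^f_{R\setminus f})_{\preceq Q} = \{P' \in \Tscr^f : Q^+ \subseteq (P')^+\}.
\end{align*}

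Next I would pull back through $\pi_f$. A tope $P \in \Tscr$ lies in $\pi_f^{-1}((\Tscr^f_{R\setminus f})_{\preceq Q})$ iff its restriction satisfies $Q^+ \subseteq (P|_{E\setminus f})^+$. Since $Q$ is a sign vector on $E\setminus f$ its positive support $Q^+$ is already contained in $E\setminus f$, so restriction does not alter those coordinates: the condition $Q^+ \subseteq (P|_{E\setminus f})^+$ is equivalent to $P_e = +$ for every $e \in Q^+$, which is precisely the defining condition of $\Tscr(Q^+, \emptyset)$. Both inclusions follow at once from this chain of equivalences.

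The only mild point worth checking is that $\pi_f$ is a well-defined poset map $\Tscr_R \to \Tscr^f_{R\setminus f}$. Well-definedness on the level of sets holds because restricting a full-support sign vector on $E$ produces a full-support sign vector on $E\setminus \{f\}$ lying in $\Lscr \setminus f$, and $\Lscr \setminus f$ is again a (simple) COM by closure of COMs under deletion cited earlier. Monotonicity is immediate from $S(R\setminus f, P|_{E\setminus f}) = S(R,P) \setminus \{f\}$, which is monotone in $P$. With $\pi_f$ in hand the chain of equivalences above closes the proof.
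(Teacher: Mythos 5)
Your proposal is correct and takes essentially the same approach as the paper: translate the base-pattern order $\preceq_R$ (with $R$ all-positive) into containment of positive (equivalently, negative) supports, then observe that restriction along $\pi_f$ does not disturb coordinates in $Q^+ \subseteq E\setminus\{f\}$, so the two descriptions of the fiber coincide. The paper proves the two set inclusions separately while you package the same content as a chain of equivalences; the extra check that $\pi_f$ is a well-defined poset map is a harmless addition.
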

\begin{proof}
  Let $\tilde Q \in (\Tscr\setminus f_{R \backslash f})_{\preceq Q}$.  As $R\setminus f$ is all
  positive, we must have $\tilde Q^- \subseteq Q^-$ and hence $Q^+
   \subseteq \tilde Q^+$ implying $\pi_{f}^{-1}(\tilde Q)
  \subseteq \Tscr(Q^+,\emptyset)$.
If on the other
  hand  $\hat Q \in \Tscr(Q^+,\emptyset)$, then
  $\hat Q^- \subseteq Q^-\cup\{f\},$ $Q^+
  \subseteq \hat Q^+\cup \{f\}$. Hence  $\pi_f(\hat Q) \preceq_{R\setminus f} Q$.
\end{proof}

We need two preparatory results for the proof of \ref{thm:supertope}. For the first one also see~\cite[Lemma 10]{HKK}. We reprove that result here, since in the presentation in~\cite{HKK} the signs are chosen the opposite way.  Recall from \ref{ex:cycle} that $\Cscr_n$ is the set of covectors of the OM of the directed cycle on $n$ vertices. 

\begin{Proposition}\label{lem:supertope}
Let $\M=(E,\mathcal{L})$, $\mathcal{L} \neq \mathcal{C}_n$, be a COM with tope set $\Tscr$ and let $R=\{+\}^E$. If for all $f\in E$ we have \new{$-_{f}R\in\Tscr$, then the poset $\Tscr_{R}$ is contractible.}
\end{Proposition}

\begin{proof}
We will show by induction that if $-R \notin \mathcal{T}$, then all covectors which contain exactly one plus-entry and at least one minus-entry are in $\mathcal{L}$. Since then in particular all covectors which contain exactly one minus-entry and one plus-entry (i.e. the cocircuits of $\mathcal{C}_n$) exist in $\mathcal{L}$, we get by (SE) that the all zero vector is in $\mathcal{L}$. Together, we can conclude that $\mathcal{C}_n \subseteq \mathcal{L}$, since we obtain all its covectors by composition of those vectors. Since $\mathcal{C}_n$ is uniform and contains $\mathbf{0}$, every other COM containing it has to be a OM which is free. This means that every possible sign-vector vector is a covector and is a consequence of the strong elimination axiom. Therefore $\mathcal{C}_n = \mathcal{L}$, since $-R \notin \mathcal{T}$. Taking the contraposition we see that if $\mathcal{L}\neq \Cscr_n$,
  then $-R\in\Tscr$, so the poset $\Tscr_R$ has a unique maximal element. In particular, it is contractible.

So let $-_{f}R \in \Tscr$ for all $f \in E$ and $-R\notin\Tscr$. We will use induction over the number of zero-entries in the covectors, i.e.\,we want to show that for every $n = 0,\dots,\lvert E \rvert-2$ all sign-vectors with $n$ zero entries, one plus-entry and $\lvert E \rvert-(n+1)$ minus-entries are in $\mathcal{L}$.  

$n=0$: By the existence of $-_{f}R$ there is nothing to show. We fix $n\ge0$ and assume that all covectors with $n$ or fewer zero-entries, exactly one plus-entry and at least one minus-entry exist in $\mathcal{L}$.

$n \rightarrow n+1 \leq \lvert E \rvert-2$: We will show that there
exists a covector with zero-entries in the i-th position, $i \in I
\subset E$, $\lvert I \rvert = n+1$, a plus-entry in the j-th
position, $j \notin I$ and $-$ everywhere else. We choose an $\hat{i}
\in I$ and consider two covectors with $0$ in $I \backslash \hat{i}$,
where one has a $+$ in the $\hat{i}$-th position and the other one
in the j-th position and both have a $-$ everywhere else. These do
exist by inductive assumption. W.l.o.g.  those two covectors look like
this:
\begin{align*}
&(0,\dots,0,\overbrace{+}^{\hat{i}},\;\,-\,\;,-,\dots,-)\\
&(\underbrace{0,\dots,0}_{I \backslash \hat{i}},\;\,-\,\;,\underbrace{+}_{j},-,\dots,-).
\end{align*}
 If we now perform strong elimination on $ \hat i$ with those two covectors we get the covector
\begin{align*}
X = (\underbrace{0,\dots,0}_{I \backslash \hat{i}},\underbrace{0}_{\hat{i}},\underbrace{*}_j,-,\dots,-).
\end{align*}
If $*$ was $-$, then $X \circ -_jR = \{-\}^{E}$. Since $\{-\}^{E}=-R\notin\Tscr$ we have $*=+$ and have the covector we were looking for, so $\mathcal{L} = \Cscr_n$. 

\end{proof}

\begin{Lemma} \label{lem:Cn} 
\new{Let $\M=(E,\mathcal{L})$ be a topal fiber of a COM $\M'=(E',\mathcal{L'})$, $R'\in \Tscr'$ a tope of $\M'$ and $R=R'_{|E}$ its restriction to $E$. If $\mathcal{L} = \mathcal{C}_n$, then $R \in \mathcal{L}$, in particular $R \neq \{+\}^E, \{-\}^E$.}

\end{Lemma}

\begin{proof}\new{Let $\M=(E,\mathcal{L})$ be a COM such that there is a COM $\M=(E',\mathcal{L'})$, with $E\subset E'$ and $\mathcal{L} = \rho_{(S^+,S^-)}(\Lscr')$ for some $S^+,S^- \subseteq E'$ and $\mathcal{L}=\mathcal{C}_n$.}
We saw in \ref{def:cycle} that $\mathbf{0} \in \mathcal{C}_n$. By the definition of $\rho_{(S^+,S^-)}(\Lscr')$ there exists $Z\in \mathcal{L}'$ with
$$Z_e = \begin{cases}
                        + & \text{ if } e \in S^+,\\
                        - & \text{ if } e \in S^-\\
                        0 & \text{ else. }
                       \end{cases} 
                $$
Since the composition of $Z$ with every other covector in $\mathcal{L}'$ is in $\mathcal{L}'$ (see \ref{rem:OMs}), we see that $\rho_{(S^+,S^-)}(\Lscr') = \mathcal{L}'\backslash \{S^+ \cup S^-\}$. So in this case $\mathcal{L}'\backslash \{S^+ \cup S^-\} = \mathcal{C}_n$, so every tope $R' \in \mathcal{L}'$ restricted to $E$ has to be in $\mathcal{C}_n$. Since $\{+\}^E, \{-\}^E \notin \mathcal{C}_n$, $R=R'|_E \neq \{+\}^E, \{-\}^E$.

\end{proof}

Now we are in position to prove \ref{thm:supertope}.

\begin{proof}[Proof of \ref{thm:supertope}] Let $\M=(E,\mathcal{L})$ be a COM such that there is a COM $\M'=(E',\mathcal{L'})$, with $E\subset E'$ and $\mathcal{L} = \rho_{(S^+,S^-)}(\Lscr')$ for some $S^+,S^- \subseteq E'$, $R'\in \Tscr'$ a tope of $\M'$ and $R=R'_{|E}$ its restriction to $E$. First we look at the case $\mathcal{L}=\mathcal{C}_n$. From \ref{lem:Cn} we know, $R$ is a tope of $\mathcal{C}_n$. So we have a unique minimal element and $\Tscr_R$ is contractible.
Now let $\mathcal{L} \neq \mathcal{C}_n$. Possibly reorienting
elements we may assume that $R=\{+\}^E$. We proceed by induction on
  $|E|$. If $|E|=1$ then $\Tscr_R$ either is a singleton or a chain of
  length 2 and thus contractible. Hence assume $|E| \ge 2$. If for
  all $f \in E$ there exists $-_{f}R$ as in \ref{lem:supertope}, then
  $\Tscr_R$ is contractible by \ref{lem:supertope}. Hence we may
  assume that there exists $f \in E$ such that $-_{f}R \not \in \Tscr$. Let $(\Tscr\hspace{-.1cm}\setminus \hspace{-.1cm}f)_{R \setminus f}$ denote the tope poset in $\Lscr \setminus f$ with base pattern $R \setminus f$. Since the class of COMs is closed under deletion, we know that $\Lscr'\setminus f$ is a COM. Since $\mathcal{L}$ evolved from $\mathcal{L}'$ by setting $\Lscr= \rho_{(S^+,S^-)}\Lscr'$ for some $S^+$ and $S^-$, $\Lscr \setminus f$ evolves in the same way from $\Lscr'\setminus f$, i.e. $\Lscr \setminus f = \rho_{(S^+,S^-)}(\Lscr'\setminus f)$ (note that $f$ cannot be in $S^+ \cup S^-$, since $E=E' \setminus (S^+ \cup S^-)$ and $f \in E$). Also $R \setminus f$ is the restriction to $E$ of the tope $R'\setminus f$ of $\Lscr'\setminus f$. We see that $\Lscr \setminus f$ together with $R \setminus f$ fulfills the assumptions of \ref{thm:supertope}. Furthermore, $\Lscr \setminus f \neq \mathcal{C}_n$, this follows from \ref{lem:Cn}. Hence,
  $(\Tscr\hspace{-.1cm}\setminus \hspace{-.1cm}f)_{R \setminus f}$ is contractible by inductive
  assumption.  
  We now want to show that $\Tscr_R$ and $(\Tscr\setminus f)_{R \setminus f}$ are homotopy equivalent by using \ref{lem:quillen}. So consider the order-preserving map $\pi_{f}:\Tscr_R \to (\Tscr\hspace{-.1cm}\setminus \hspace{-.1cm}f)_{R \setminus f}$ given by
  restriction. Let $Q \in (\Tscr\hspace{-.1cm}\setminus \hspace{-.1cm}f)_{R \setminus f}$. By \ref{lem:fiberisst}
  \[\pi_{f}^{-1}(((\Tscr\setminus f)_{R \setminus f})_{\preceq Q})=\Tscr(Q^+,\emptyset).\] 
  $\Tscr(Q^+,\emptyset)$ is the set of topes of $\rho_{(S^+\cup Q^+,S^-)}\Lscr'$. If $Q^+ \ne \emptyset$, then $\rho_{(S^+\cup Q^+,S^-)}\Lscr'$ has fewer elements than $\mathcal{L}$. Furthermore, by  \ref{lem:Cn}, $\rho_{(S^+\cup Q^+,S^-)}\Lscr' \neq \mathcal{C}_n$. Hence
  $\pi_{f}^{-1}((\Tscr\setminus f)_{\preceq Q})$ is contractible by inductive
  assumption. If $Q^+= \emptyset$ then by the choice of $f$ the
  preimage $\pi_{f}^{-1}(Q)$ is the all minus vector.
  Hence, this is the unique maximal element in
  $\pi_{f}^{-1}((\Tscr\setminus f)_{\preceq Q})$ and that fiber is also contractible. 
  So by \ref{lem:quillen} $\Tscr_R$ and $(\Tscr\setminus f)_{R \setminus f}$ are
  homotopy equivalent and the claim follows.
\end{proof}

We now introduce $\Tscr_{R,e}$, which is a truncated version of $\Tscr_R$. For $e \in E$ and $R \in \Tscr$ we define $\Tscr_{R,e}$ as the poset 
$\{ T \in \Tscr~|~T_e = -R_e\}\cup \{ \hat{0}\}$ 
with $\hat{0}$ as its artificial least element and the remaining poset 
structure induced from $\Tscr_R$.
For $P \in \Tscr_{R,e}$ we write $(\hat{0},P)_{R,e}$ for the interval from
$\hat{0}$ to $P$ in $\Tscr_{R,e}$. The following result will help us later in the main proof to obtain a factorization of the Varchenko matrix.

\begin{Lemma}
  \label{cor:crucial}
Let $\M=(E,\mathcal{L})$ be a COM, $R \in \Tscr$ a tope, $e\in E$ an element, $\hat{0} \neq P \in \Tscr_{R,e}$ and $S$ such that $e \not\in S\subseteq E$. Then 
\begin{align}\label{cruc1}
  \sum_{\genfrac{}{}{0pt}{}{Q \in \Tscr(\emptyset,\{e\})}{ S= S(P,Q) \cap S(Q,R)}}  \mu((\hat{0},Q)_{R,e}) = \left\{ \begin{array}{ccc} -1 & \mbox{~if~} & S = \emptyset \\ 
                                       0 & \mbox{~if~} & S \neq \emptyset 
                  \end{array} \right. 
\end{align}
and 
\begin{align}\label{cruc2}
  \sum_{\genfrac{}{}{0pt}{}{Q \in \Tscr(\{e\},\emptyset)}{ S= S(P,Q) \cap S(Q,R)}}  \mu((\hat{0},Q)_{R,e}) = \left\{ \begin{array}{ccc} -1 & \mbox{~if~} & S = \emptyset \\ 
                                       0 & \mbox{~if~} & S \neq \emptyset 
                  \end{array} \right. .
\end{align}
\end{Lemma}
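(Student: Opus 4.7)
My approach reduces both identities to a single inclusion--exclusion argument built on \ref{thm:supertope}. By reorienting the ground set I may assume $R = \{+\}^E$, so that $P_e = -$ and $\mathcal{T}_{R,e}$ consists of the topes $Q$ with $Q_e = -$ together with the artificial minimum $\hat{0}$; in this normalization I will prove \eqref{cruc1}, and \eqref{cruc2} then follows by additionally reorienting the element $e$.

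Under this normalization, $S(R,Q)=Q^-$, so the condition $S=S(P,Q)\cap S(Q,R)$ translates to $S=P^+\cap Q^-$; if $S\not\subseteq P^+$ the indexing set is empty and both sides of \eqref{cruc1} vanish, so I may assume $S\subseteq P^+$. Set $A^+:=P^+\setminus S$, $A^-:=S\cup\{e\}$, and abbreviate $\sigma(Q):=\mu((\hat{0},Q)_{R,e})$, so that the left-hand side of \eqref{cruc1} becomes $\sum_{Q\in\mathcal{T}(A^+,A^-)}\sigma(Q)$. Because $\mathcal{T}(A^+,A^-)$ is an intersection of a down-set and an up-set of $\mathcal{T}_{R,e}$, it cannot be fed directly into \ref{thm:supertope}; instead I expand the upward constraint by inclusion--exclusion,
\[
[S\subseteq Q^-]=\prod_{s\in S}\bigl(1-[Q_s=+]\bigr)=\sum_{T\subseteq S}(-1)^{|T|}[T\cap Q^-=\emptyset],
\]
obtaining
\[
\sum_{Q\in\mathcal{T}(A^+,A^-)}\sigma(Q)=\sum_{T\subseteq S}(-1)^{|T|}\sum_{Q\in\mathcal{D}_T}\sigma(Q),\qquad \mathcal{D}_T:=\mathcal{T}(A^+\cup T,\{e\}).
\]
Each $\mathcal{D}_T$ is now a down-set of $\mathcal{T}_{R,e}\setminus\{\hat{0}\}$ under $\preceq_R$, since $Q'^-\subseteq Q^-$ together with $Q^-\cap(A^+\cup T)=\emptyset$ forces $Q'^-\cap(A^+\cup T)=\emptyset$.

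The key auxiliary identity, which I will state as a short lemma, is that for every down-set $D\subseteq\mathcal{T}_{R,e}\setminus\{\hat{0}\}$,
\[
\sum_{Q\in D}\sigma(Q)=-1-\mu(D),
\]
where $\mu(D)$ denotes the Möbius number of $D$ viewed as a subposet. The down-set hypothesis ensures $[\hat{0},Q]_{\mathcal{T}_{R,e}}\subseteq D\cup\{\hat{0}\}$ for every $Q\in D$, hence $\sigma(Q)=\mu_{D\cup\{\hat{0}\}}(\hat{0},Q)$. Adjoining a new top $\hat{1}$ turns $D\cup\{\hat{0}\}$ into the bounded extension of $D$, so the standard Möbius recursion $\sum_{x<\hat{1}}\mu(\hat{0},x)=-\mu(\hat{0},\hat{1})$ yields the identity (with the corner case $D=\emptyset$ handled by $\mu(\emptyset)=-1$).

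To conclude, each $\mathcal{D}_T$ is the tope set of the restriction $\rho_{(A^+\cup T,\{e\})}(\mathcal{L})$; it is non-empty because $P\in\mathcal{D}_T$ (as $A^+\cup T\subseteq P^+$ and $P_e=-$), and its natural base pattern $+^{E\setminus(A^+\cup T\cup\{e\})}$ is the restriction of the tope $R=+^E$ of $\mathcal{L}$. \ref{thm:supertope} therefore applies and forces the order complex of $\mathcal{D}_T$ to be contractible, so $\mu(\mathcal{D}_T)=0$ by \ref{cor:moebius}, and the auxiliary lemma gives $\sum_{Q\in\mathcal{D}_T}\sigma(Q)=-1$. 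Substituting back, the entire sum collapses to $-\sum_{T\subseteq S}(-1)^{|T|}$, which is $-1$ when $S=\emptyset$ and $0$ otherwise. The main conceptual obstacle I anticipate is the very first move: recognizing that the mixed down-set/up-set structure of $\mathcal{T}(A^+,A^-)$ can be traded, via the inclusion--exclusion expansion above, for a signed sum over pure down-sets, each of which is a tope set of a restriction of $\mathcal{L}$ and hence controlled by \ref{thm:supertope}; the rest is Möbius bookkeeping.
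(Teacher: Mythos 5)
Your proposal is correct and takes essentially the same route as the paper: normalize to $R=\{+\}^E$, identify each $\mathcal{T}(\cdot,\{e\})$ as a down-set of $\mathcal{T}_{R,e}\setminus\{\hat{0}\}$ that is the tope poset of a restriction $\rho_{(\cdot,\{e\})}(\Lscr)$ and hence contractible by \ref{thm:supertope}, and use the M\"obius recursion on the bounded extension to conclude that the sum of $\mu((\hat 0,Q)_{R,e})$ over each such down-set equals $-1$. The only difference is that the paper converts the ``$\subseteq S$'' version into the ``$=S$'' version by a strong induction on $|S|$, whereas you carry out the identical M\"obius inversion over $2^S$ as an explicit one-step inclusion--exclusion $\sum_{T\subseteq S}(-1)^{|T|}$; the underlying argument is the same.
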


\begin{proof}
In order to prove (\ref{cruc1}) we assume $R = \{+\}^E$. We prove the assertion by induction on $|S|$.
  If $S = \emptyset$ then 
  \begin{eqnarray*} 
     \sum_{\genfrac{}{}{0pt}{}{Q \in \Tscr(\emptyset,\{e\})}{ S= S(P,Q) \cap S(Q,R)}}  \mu((\hat{0},Q)_{R,e}) & = & \sum_{\hat{0} <_{R,e} Q \leq_{R,e} P} \mu((\hat{0},Q)_{R,e}) 
  \end{eqnarray*}
Note  that  $\sum_{\hat{0} <_{R,e} Q \leq_{R,e} P} \mu((\hat{0},Q)_{R,e}) = -\mu\left(\{Q \in \Tscr_{R,e}\,|\,\hat{0}<_{R,e} Q \leq_{R,e} P\}\right) -  \mu((\hat{0}, \hat{0})_{R,e})$. The poset $\{Q \in \Tscr_{R,e}\,|\,\hat{0}<_{R,e} Q \leq_{R,e} P\}$ has the maximal element $P$, so it is contractible and has M\"obius number $0$. Therefore we have
\begin{align*}
\sum_{\genfrac{}{}{0pt}{}{Q \in \Tscr(\emptyset,\{e\})}{ S= S(P,Q) \cap S(Q,R)}}  \mu((\hat{0},Q)_{R,e}) = -\mu((\hat{0}, \hat{0})_{R,e}) = -1.
\end{align*}
  Assume $|S| > 0$. 
  Set $$T^+ = \{ f\in E \setminus (S \cup \{e\})~|~P_f = +\}.$$
  
  Then 
  \begin{eqnarray} 
     \label{eq:supertope}
     \sum_{\genfrac{}{}{0pt}{}{Q \in \Tscr(\emptyset,\{e\})}{ S(P,Q) \cap S(Q,R) \subseteq S}}  \mu((\hat{0},Q)_{R,e}) & = & 
     \sum_{Q \in \Tscr(T^+,\{e\})} \mu((\hat{0},Q)_{R,e}). 
  \end{eqnarray}

  $\Tscr(T^+,\{e\})$ is the set of topes of the
  COM $\rho_{(T^+,\{e\})}\Lscr$, where the associated poset $\Tscr(T^+,\{e\})_R$ is contractible by \ref{thm:supertope} (note that we did not suppress the redundant coordinates $(T^+ \cup \{e\})$ here to keep notation simple).  Since the right
  hand side of \eqref{eq:supertope} ranges over the elements of a
  contractible poset, we have again
  \begin{align*}
  \sum_{Q \in \Tscr(T^+,\{e\})} \mu((\hat{0},Q)_{R,e}) = -\mu(\Tscr(T^+,\{e\})_R) -\mu(\hat{0},\hat{0}) = -\mu(\hat{0},\hat{0}) = -1.
  \end{align*}
  and showed

  \begin{eqnarray} 
     \label{eq:supertope2}
     \sum_{\genfrac{}{}{0pt}{}{Q \in \Tscr(\emptyset,\{e\})}{ S(P,Q) \cap S(Q,R) \subseteq S}}  \mu((\hat{0},Q)_{R,e}) & = & -1.
  \end{eqnarray}

  Now rewrite the left hand side of \eqref{eq:supertope2} as

  \begin{eqnarray} 
     \label{eq:supertope3}
     \sum_{\genfrac{}{}{0pt}{}{Q \in \Tscr(\emptyset,\{e\})}{ S(P,Q) \cap S(Q,R) \subseteq S}}  \mu((\hat{0},Q)_{R,e}) & = & \sum_{T \subseteq S} \sum_{\genfrac{}{}{0pt}{}{Q \in \Tscr(\emptyset,\{e\})}{ S(P,Q) \cap S(Q,R) = T}}  \mu((\hat{0},Q)_{R,e}) 
  \end{eqnarray}

  By induction the summand 
  $\displaystyle{\sum_{\genfrac{}{}{0pt}{}{Q \in \Tscr(\emptyset,\{e\})}{ S(P,Q) \cap S(Q,R) = T}}  \mu((\hat{0},Q)_{R,e})}$ 
  is $0$ for $T \neq S,\emptyset$ and $-1$ for $T = \emptyset$.
  Thus combining \eqref{eq:supertope2} and \eqref{eq:supertope3} we obtain:

  \begin{eqnarray*} 
     \label{eq:supertope4}
     -1 & = & \sum_{\genfrac{}{}{0pt}{}{Q \in \Tscr(\emptyset,\{e\})}{ S(P,Q) \cap S(Q,R) \subseteq S}}  \mu((\hat{0},Q)_{R,e}) \\
        & = & -1+\sum_{\genfrac{}{}{0pt}{}{Q \in \Tscr(\emptyset,\{e\})}{ S(P,Q) \cap S(Q,R) = S}}  \mu((\hat{0},Q)_{R,e}) 
  \end{eqnarray*}

  From this we conclude $$\sum_{\genfrac{}{}{0pt}{}{Q \in
      \Tscr(\emptyset,\{e\})}{ S(P,Q) \cap S(Q,R) = S}}
  \mu((\hat{0},Q)_{R,e}) = 0.$$ The second claim follows analogously
  by reorienting all the signs.
\end{proof}

We conclude this section with another result on contractability needed in the proof of our main theorem. We start with a lemma:

\begin{Lemma}\label{gateCOM} Let $\M=(E,\mathcal{L})$ be a COM, $X \in \mathcal{L}$ and $P \in \Tscr$. The tope $Q = X \circ P \in \cstar(X)$ is the only tope in $\cstar(X)$ such that for all $O \in \cstar(X)$ we have
  \begin{align}
   S(P,O) & =  S(P,Q) \cup S(Q,O) \label{eq:h1} \\  
    \emptyset & =   S(P,Q) \cap S(Q,O). \label{eq:h2}
  \end{align}
\end{Lemma}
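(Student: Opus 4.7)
The plan is to verify existence by a direct case analysis on elements $e \in E$ depending on whether $X_e = 0$ or $X_e \neq 0$, and then to establish uniqueness by plugging candidate gates into each other's defining property.

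For existence, I would set $Q = X \circ P$, note that $O \in \cstar(X)$ means $O_e = X_e$ whenever $X_e \neq 0$, and split $E$ into the two classes $\underline{X}$ and $z(X)$. On $\underline{X}$, the definition of composition gives $Q_e = X_e$, so $Q$ and every $O \in \cstar(X)$ agree there; hence no element of $\underline{X}$ lies in $S(Q,O)$, and membership of such $e$ in $S(P,Q)$ coincides with membership in $S(P,O)$ (both reduce to $X_e = -P_e \neq 0$). On $z(X)$, the composition gives $Q_e = P_e$, so no element of $z(X)$ lies in $S(P,Q)$, while membership in $S(Q,O)$ and $S(P,O)$ both reduce to $O_e = -P_e$. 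Thus $S(P,Q) \subseteq \underline{X}$ and $S(Q,O) \subseteq z(X)$, which immediately yields \eqref{eq:h2}, and the two containment observations above give \eqref{eq:h1} by taking unions.

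For uniqueness, I would assume $Q' \in \cstar(X)$ also satisfies \eqref{eq:h1} and \eqref{eq:h2}. Apply $Q'$'s defining property with $O = Q$ to get
\[ S(P,Q) = S(P,Q') \cup S(Q',Q), \qquad S(P,Q') \cap S(Q',Q) = \emptyset, \]
which forces $S(Q,Q') = S(Q',Q) \subseteq S(P,Q)$. Then apply $Q$'s property with $O = Q'$, yielding $S(P,Q) \cap S(Q,Q') = \emptyset$. Combining the two gives $S(Q,Q') = \emptyset$, so $Q = Q'$.

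The argument is essentially a bookkeeping verification; the only part that requires minor care is keeping track of which separator lives in $\underline{X}$ versus $z(X)$, because that disjointness is what simultaneously powers \eqref{eq:h1}, \eqref{eq:h2}, and the uniqueness collapse. No deeper COM axiom (beyond the definition of $\circ$ and $\cstar(X)$) is needed.
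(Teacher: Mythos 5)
Your proof is correct. The existence part is a fleshed-out version of what the paper dismisses as ``easy to see,'' and your partition $E = \underline{X} \cup z(X)$ with the observations $S(P,Q)\subseteq \underline{X}$, $S(Q,O)\subseteq z(X)$ is exactly the right bookkeeping.

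Your uniqueness argument, however, is genuinely different and arguably cleaner. The paper assumes a second gate $Q^*\neq Q$, notes that $S(Q^*,O)\subseteq z(X)$ while $S(P,Q^*)$ must contain some element of $z(X)$, and then explicitly constructs the witness $O^*=X\circ(-Q^*)\in \cstar(X)$ for which $S(P,Q^*)\cap S(Q^*,O^*)\neq\emptyset$; this uses (FS) to produce $O^*$. You instead run a purely formal symmetry argument: plug $O=Q$ into $Q'$'s property to get $S(Q,Q')\subseteq S(P,Q)$, plug $O=Q'$ into $Q$'s property to get $S(Q,Q')\cap S(P,Q)=\emptyset$, and conclude $S(Q,Q')=\emptyset$, hence $Q=Q'$ since topes have full support. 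This avoids constructing any auxiliary covector and needs no COM axiom beyond the ambient definitions. The paper's approach is more concrete and highlights where the nonconvexity would bite, but yours is shorter and isolates the fact that ``gate'' as defined by \eqref{eq:h1}--\eqref{eq:h2} is automatically unique in any metric-like setting, with existence being the only part that uses the structure of $\cstar(X)$.
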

\begin{proof}
 In the case where $X$ is a tope there is nothing to show, so we assume $z(X) \neq \emptyset$. It is easy to see that $Q$ fulfills (\ref{eq:h1}) and (\ref{eq:h2}).
  Let us assume there is another tope $Q^* \neq Q$ in $\cstar(X)$
  which has this property. By the definition of $Q$ we have
  \[S(Q,O)=S(P,O) \cap z(X)\text{ and }S(P,Q)=S(P,O) \backslash z(X)\text{ for all }O \in \cstar(X).\] 
  Since $Q^* \neq Q$ and $S(Q^*,O)$
  can only contain elements from $z(X)$, $S(P,Q^*)$ has to contain at least
  one element from $z(X)$. Now considering $O^* = (X \circ -Q^*) \in
  \cstar(X)$ we see that $S(P,Q^*) \cap S(Q^*,O^*) \neq \emptyset$, so
  $Q^*$ does not fulfill the property and we have a contradiction.
\end{proof}

For $e \in E$ and $P \in \Tscr$ we say that $e$ \emph{defines a proper face} of $P$
if there is a covector $X \in \mathcal{L}$ with $X \leq P$ and $X_e =
0$ with $X\neq \mathbf{0}$. Note that in this case there is a unique maximal such covector, namely the composition of all of them. Otherwise, we say that $e$ \emph{does not define a proper face} of $P$.

\begin{Theorem}
  \label{thm:moebius}
  Let $\M=(E,\mathcal{L})$ be a COM, $R \in \Tscr$ a tope, and let $e \in E$ define a proper face of $R$. 
  Let $Y \in \mathcal{L}$ 
  be the maximal covector such that $Y \leq R$ and $Y_e = 0$
  and choose 
  $P_{top} \in \Tscr_{R,e} \setminus \cstar(Y)$. Then
  $(\hat{0},P_{top})_{R,e}$ is contractible. 
  In particular, $\mu((\hat{0},P_{top})_{R,e}) = 0$.
\end{Theorem}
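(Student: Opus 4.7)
The plan is to show that $(\hat{0}, P_{top})_{R,e}$ is contractible by exhibiting a contractible subposet to which it is homotopy equivalent via the Quillen Fiber Lemma.

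After reorienting we may assume $R = \{+\}^E$. Let $I := \cstar(Y) \cap (\hat{0}, P_{top})_{R,e}$. I first claim that $Y \circ P_{top}$ is the unique maximum of $I$. Indeed, $Y \circ P_{top}$ lies in $\mathcal{L}$ by composition and has full support $\underline{Y} \cup \underline{P_{top}} = E$, so it is a tope; since $e \in z(Y)$, $(Y \circ P_{top})_e = (P_{top})_e = -R_e$, so it lies in $\mathcal{T}_{R,e}$; and $Y \le Y \circ P_{top}$, so $Y \circ P_{top} \in \cstar(Y)$. Its separator from $R$ is
\[
S(R, Y \circ P_{top}) = S(R, P_{top}) \cap z(Y),
\]
which is a \emph{proper} subset of $S(R, P_{top})$ precisely because the hypothesis $P_{top} \notin \cstar(Y)$ forces $S(R, P_{top}) \cap \underline{Y} \neq \emptyset$; hence $Y \circ P_{top}$ lies strictly below $P_{top}$ and belongs to the open interval. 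Conversely, any $U \in I$ has $S(R, U) \subseteq z(Y)$ (because $U \in \cstar(Y)$ forces $U$ to agree with $R$ on $\underline{Y}$) together with $S(R, U) \subsetneq S(R, P_{top})$, so $S(R, U) \subseteq z(Y) \cap S(R, P_{top}) = S(R, Y \circ P_{top})$, i.e.\ $U \preceq_R Y \circ P_{top}$. Thus $I$ has a unique maximum and is contractible.

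I then apply \ref{lem:quillen} to the inclusion $\iota: I \hookrightarrow (\hat{0}, P_{top})_{R,e}$. For each $T$ in the target, the preimage $\iota^{-1}\bigl(\{T' \in (\hat{0}, P_{top})_{R,e} : T' \preceq_R T\}\bigr) = \{U \in I : U \preceq_R T\}$. The same sign-vector computation applied with $T$ in place of $P_{top}$ shows that $Y \circ T$ is the unique maximum of this preimage: it is a tope of $\cstar(Y)$ with $e$-coordinate $-R_e$; its separator $S(R, T) \cap z(Y)$ is contained in $S(R, T) \subsetneq S(R, P_{top})$, so it still lies in the open interval; and any $U$ in the preimage satisfies $S(R, U) \subseteq z(Y) \cap S(R, T) = S(R, Y \circ T)$. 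Each preimage is therefore contractible, so \ref{lem:quillen} asserts that $\iota$ is a homotopy equivalence, and hence $(\hat{0}, P_{top})_{R,e}$ is contractible.

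The M\"obius number conclusion then follows from \ref{cor:moebius}. The only subtle step is the \emph{strict} inclusion $S(R, Y \circ P_{top}) \subsetneq S(R, P_{top})$, which is exactly where the hypothesis $P_{top} \notin \cstar(Y)$ enters; for the fiber argument the analogous strict inclusion comes for free from $T$ being in the open interval below $P_{top}$. Everything else reduces to routine manipulations with composition, separators, and the definition of $\cstar$.
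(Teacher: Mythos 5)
Your proof is correct and follows essentially the same route as the paper: both arguments deformation-retract the open interval onto the subposet $I = \cstar(Y) \cap (\hat{0}, P_{top})_{R,e}$ via $P \mapsto Y \circ P$, and then observe that $I$ has the unique maximum $Y \circ P_{top}$. The only difference is packaging: the paper notes that $\circ_Y$ is a closure operator and cites the closure-operator corollary (Bj\"orner, Cor.~10.12), whereas you invoke the Quillen Fiber Lemma directly on the inclusion $I \hookrightarrow (\hat{0},P_{top})_{R,e}$ — a routine variant, since the closure-operator result is itself typically derived from Quillen's lemma.
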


\begin{proof} 
Let $P \in (\hat{0},P_{top})_{R,e}$. Then by \ref{gateCOM} the tope
  $Q = Y \circ P \in \cstar(Y)$ is the unique tope in $\cstar(Y)$ 
  such that for all $O \in \cstar(Y)$ we have
  \begin{eqnarray*}
    S(P,O) & = & S(P,Q) \cup S(Q,O) \\
    \emptyset & = &  S(P,Q) \cap S(Q,O).
  \end{eqnarray*}

Since $Y_e = 0$ and $P \in \Tscr_{R,e}$ it also follows that $Q_e = -$.
  Since $Y \le R$, clearly $S(R,Q) =S(R,Y \circ P)\subseteq
  S(R,P)$ and hence $Q \preceq_R P$.  This shows $Q \in
  (\hat{0},P_{top})_{R,e}$.
We now define the map 
\begin{align*}
&\circ_Y: (\hat{0},P_{top})_{R,e}
  \rightarrow (\hat{0},P_{top})_{R,e}\\
&\circ_Y(P) = Y \circ P
\end{align*}  
   and prove that it is a closure operator by showing that it is order preserving and idempotent (i.e. $\circ_Y(\circ_Y(P)) = \circ_Y(P)$). So let $Q \preceq_R Q'$. Then $Y \circ Q
  \preceq_R Y \circ Q'$. Since $Y \leq R$ it follows that $Y \circ Q
  \preceq_R Q$. Obviously $Y \circ (Y \circ Q) = Y \circ Q$. So $\circ_Y$ is a closure operator and it follows that $(\hat{0},P_{top})_{R,e}$ is homotopy equivalent to its
  image (see e.g, \cite[Corollary 10.12]{Bj}).   

  Since $P_{top} \not\in \cstar(Y)$ and $Y \circ P_{top}  \in \cstar(Y)
  \cap (\hat{0},P_{top})_{R,e}$, 
  it also follows that $Y \circ Q \preceq_R Y \circ P_{top}$ for all 
  $Q \in (\hat{0},P_{top})_{R,e}$. Hence the image of $\circ_Y$ has
  a unique maximal element and hence is contractible.  
\end{proof}

\section{Main Proof}\label{sec:4}
\new{In this Section we assume that $\M=(E,\mathcal{L})$ is a COM with topes $\Tscr$ and signed Varchenko matrix $\Var$.} Recall that we assume $\Tscr$ to be linearly ordered. Note however that swapping two topes leads to a row swap and a column swap at the same time, so we do not change the sign of our determinant. Hence, in this section we will rearrange the ordering on $\Tscr$, whenever it is convenient for the proof. Moreover, for the proof we also fix a linear ordering on $E$, i.e., $E = \{ e_1 \prec \cdots \prec e_r\}$.

For any sign vector $\epsilon =
(\epsilon_1,\epsilon_2) \in \{ +,-\}^2$ let $\Var^{e,\epsilon}$
be a matrix with rows indexed by $\Tscr(\{e\} ,\emptyset)$ for $\epsilon_1 =+$,
$\Tscr(\emptyset, \{e\})$ for $\epsilon_1 = -$ and columns indexed
by $\Tscr(\{e\},\emptyset)$ for $\epsilon_2 = +$, $\Tscr(\emptyset, \{e\})$ for
$\epsilon_2 = -$. For a tope $R$ indexing a row and
a tope $Q$ indexing a column we set $\Var^{e,\epsilon}_{R,Q} = \Var_{R,Q}$. After reordering $\Tscr$ this yields a block decomposition of $\Var$ as

\begin{align}\label{equ:blocks}
\Var = \left( 
       \begin{array}{cc} 
         \Var^{e,(-,-)} & \Var^{e,(-,+)}    \\ 
         \Var^{e,(+,-)} & \Var^{e,(+,+)}  
       \end{array}
      \right) .
\end{align}

We fix such a linear ordering on $\Tscr$ and set $\mathcal{M}^e$ to 

\begin{align*}
\mathcal{M}^e_{Q,R} = \left\{ 
	    \begin{array}{ccc} 
		1 & \mbox{~if~} & Q = R \\ 
	       -\mu((\hat{0},Q)_{R,e})\, \Var_{Q,R} & \mbox{~if~} & e \text{ is the maximal element of }S(Q,R),  \\
		0 & \mbox{~otherwise} & 
	    \end{array} \right. . 
\end{align*}
	  
Note that this matrix has the following form

\begin{align*}
\mathcal{M}^e = 
      \left(  \begin{array}{cc} 
         \mathcal{I}^e_{\ell}  & U^e  \\ 
         L^e  & \mathcal{I}^e_{m} 
       \end{array}
      \right) ,
\end{align*}
      
where 
\begin{align*}
U^e_{Q,R} = -\mu((\hat{0},Q)_{R,e}) \, \Var_{Q,R},\;\;\;  &e \text{ is the maximal element of S}(Q,R), \\  &Q \in \Tscr(\emptyset,\{e\}), R \in \Tscr(\{e\},\emptyset) , \\
L^e_{Q,R} = -\mu((\hat{0},Q)_{R,e})\, \Var_{Q,R},\;\;\;  &e \text{ is the maximal element of S}(Q,R), \\ & Q \in \Tscr(\{e\},\emptyset), R \in \Tscr(\emptyset, \{e\})
\end{align*}
and $\mathcal{I}$ the identity matrix with $\ell=\#\Tscr(\emptyset,\{e\})$ and $m = \# \Tscr(\{e\},\emptyset)$.

\begin{Lemma}\label{lem:fac1}
   Let $e$ be the maximal element of $E$. Then 
   $\Var^{e,(-,+)}$ factors as
   \begin{align}\label{equ:minus}
      \Var^{e,(-,+)} = \Var^{e,(-,-)} \cdot U^e
   \end{align}
   and $\Var^{e,(+,-)}$ as
   \begin{align}\label{equ:plus}
      \Var^{e,(+,-)} = \Var^{e,(+,+)} \cdot L^e.
   \end{align}
\end{Lemma}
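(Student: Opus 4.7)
The plan is to prove \eqref{equ:minus} entry by entry, with \eqref{equ:plus} following by an entirely symmetric argument.

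First I fix $P \in \mathcal{T}(\emptyset, \{e\})$ and $R \in \mathcal{T}(\{e\}, \emptyset)$ and compute $(\Var^{e,(-,-)} \cdot U^e)_{P,R}$. Since $e$ is the maximal element of $E$ and $Q_e = - \neq + = R_e$ forces $e \in S(Q,R)$, the condition ``$e$ is the maximal element of $S(Q,R)$'' appearing in the definition of $U^e$ is satisfied for \emph{every} $Q \in \mathcal{T}(\emptyset,\{e\})$. Hence no $Q$ is excluded and
$$(\Var^{e,(-,-)} \cdot U^e)_{P,R} = -\sum_{Q \in \mathcal{T}(\emptyset, \{e\})} \mu((\hat{0},Q)_{R,e}) \cdot \Var_{P,Q} \cdot \Var_{Q,R}.$$

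Next, a short coordinate-by-coordinate inspection gives the multiplicativity identity
$$\Var_{P,Q} \cdot \Var_{Q,R} \;=\; \Var_{P,R} \cdot \prod_{f \in S(P,Q) \cap S(Q,R)} x_{f^+} x_{f^-},$$
since any $f \in S(P,Q) \cap S(Q,R)$ forces $P_f = R_f$ (so $f \notin S(P,R)$) and contributes $x_{f^{P_f}}\cdot x_{f^{Q_f}} = x_{f^+} x_{f^-}$, while outside this intersection the $f$-contributions on the two sides agree. Factoring $\Var_{P,R}$ out and grouping the sum by $S := S(P,Q) \cap S(Q,R)$ -- which is a subset of $E \setminus \{e\}$ because $P_e = Q_e = -$ forces $e \notin S(P,Q)$ -- yields
$$\Var_{P,R} \cdot \sum_{S \subseteq E \setminus \{e\}} \Bigl(\prod_{f \in S} x_{f^+} x_{f^-}\Bigr)\Bigl(-\sum_{\substack{Q \in \mathcal{T}(\emptyset, \{e\}) \\ S(P,Q) \cap S(Q,R) = S}} \mu((\hat{0},Q)_{R,e})\Bigr).$$
By equation \eqref{cruc1} of \ref{cor:crucial}, the bracketed $Q$-sum is $1$ for $S = \emptyset$ and $0$ otherwise, so the whole expression collapses to $\Var_{P,R} = \Var^{e,(-,+)}_{P,R}$, which proves \eqref{equ:minus}.

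Equation \eqref{equ:plus} is proved identically with $P \in \mathcal{T}(\{e\},\emptyset)$, $R \in \mathcal{T}(\emptyset,\{e\})$, summation index $Q \in \mathcal{T}(\{e\},\emptyset)$, and annihilation provided by \eqref{cruc2} instead of \eqref{cruc1}. The main substantive content has already been discharged in \ref{cor:crucial}; the chief obstacle here is bookkeeping, namely to check that for the pairs $(P,Q)$ and $(Q,R)$ arising in the sum the hypotheses $P \in \mathcal{T}_{R,e}$ and $e \notin S$ of \ref{cor:crucial} are in force, and that the assumption ``$e$ is the maximal element of $E$'' genuinely removes the ``maximal in $S(Q,R)$'' side condition from the definition of $U^e$ (and of $L^e$).
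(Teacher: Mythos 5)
Your proof is correct and follows essentially the same route as the paper: entry-by-entry computation, the same multiplicativity identity $\Var_{P,Q}\cdot\Var_{Q,R}=\Var_{P,R}\cdot\prod_{f\in S(P,Q)\cap S(Q,R)}x_{f^+}x_{f^-}$, grouping by $S=S(P,Q)\cap S(Q,R)$, and reduction to \ref{cor:crucial} (with the observation that maximality of $e$ in $E$ makes the side condition in $U^e$ vacuous). Your sign accounting (the leading minus in $U^e$ cancelling the $-1$ from \eqref{cruc1}) and your verification that $P\in\mathcal{T}_{R,e}$ and $e\notin S$ hold for the relevant pairs match the paper's implicit bookkeeping.
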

\begin{proof}
Let us prove (\ref{equ:minus}) first. For $P \in \Tscr(\emptyset,\{e\})$ and $R \in \Tscr(\{e\},\emptyset)$ the entry in row $P$ and
  column $R$ on the left hand side of (\ref{equ:minus}) is 
  $\Var_{P,R}$. On the right hand side the corresponding entry is:
  \begin{eqnarray*} 
     \sum_{Q \in \Tscr(\emptyset,\{e\})} \Var_{P,Q} \cdot U^e_{Q,R} & = & - \sum_{Q \in \Tscr(\emptyset,\{e\})} \mu((\hat{0},Q)_{R,e}) \cdot  \Var_{P,Q} \cdot \Var_{Q,R} 
  \end{eqnarray*} 

  This follows from the fact that $e$ is the maximal element of any separator of the topes indexing $U^e$. By definition we have for $Q \in \Tscr(\emptyset,\{e\})$ 
  \begin{eqnarray*}
    \Var_{P,Q} \cdot \Var_{Q,R} & = & \Var_{P,R} \cdot \prod_{f \in S(P,Q) \cap S(Q,R)} x_{f}^+ x_{f}^-.
  \end{eqnarray*}

We see that $ \Var_{P,Q} \cdot \Var_{Q,R}  =  \Var_{P,R}$ if $S(P,Q) \cap S(Q,R)=\emptyset$. Thus the claim of the lemma is proved once we have shown that for a fixed subset $S \subseteq E$ 
  and fixed $P,R$ we have:
  \begin{eqnarray}
     \label{eq:mueq}
     \sum_{\genfrac{}{}{0pt}{}{Q \in \Tscr(\emptyset,\{e\})}{ S= S(P,Q) \cap S(Q,R)}}  \mu((\hat{0},Q)_{R,e}) = \left\{ 
                \begin{array}{cc} 0 & \mbox{~if~} S\neq \emptyset \\
                                 -1 & \mbox{~otherwise.} 
                \end{array} \right. .
  \end{eqnarray}

  But this is the content of \ref{cor:crucial} and we are done. For (\ref{equ:plus}) the right hand side is
  \begin{align*}
  \sum_{Q \in \Tscr(\{e\},\emptyset)} \Var_{P,Q} \cdot L^e_{Q,R} & = & - \sum_{Q \in \Tscr(\{e\},\emptyset)} \mu((\hat{0},Q)_{R,e}) \cdot  \Var_{P,Q} \cdot \Var_{Q,R} 
  \end{align*}
and we can proceed analogous to the proof above.
\end{proof}

Next we use the matrices $\mathcal{M}^e$ to factorize $\Var$. 
The following lemma yields the base case for the inductive step in the 
factorization.

\begin{Lemma}
  \label{lem:fac}
  Let $e$ be the maximal element of $E$ and
  let $\Var_{x_e = 0}$ be the matrix $\Var$ after
  evaluating $x_{e}^+$ and $x_{e}^-$ to $0$. 
  Then 
  $$\Var = \Var_{x_e = 0} \cdot \mathcal{M}^e 
  $$
\end{Lemma}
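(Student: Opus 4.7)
The plan is to verify the identity block by block using the block decomposition of $\Var$ from \eqref{equ:blocks}, together with the already-established factorizations of the off-diagonal blocks in \ref{lem:fac1}.

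First I would observe what happens to $\Var$ when we set $x_{e^+}=x_{e^-}=0$. For a pair of topes $P,Q$, the entry $\Var_{P,Q} = \prod_{f \in S(P,Q)} x_{f^{P_f}}$ vanishes after the evaluation if and only if $e \in S(P,Q)$, i.e.\ exactly when $P$ and $Q$ lie on opposite sides of $e$. Hence the two off-diagonal blocks $\Var^{e,(-,+)}$ and $\Var^{e,(+,-)}$ become the zero matrix, while the two diagonal blocks $\Var^{e,(-,-)}$ and $\Var^{e,(+,+)}$ are unaffected (no entry in them involves $x_{e^+}$ or $x_{e^-}$). Thus
\[
  \Var_{x_e=0} = \begin{pmatrix} \Var^{e,(-,-)} & 0 \\ 0 & \Var^{e,(+,+)} \end{pmatrix}.
\]

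Next I would compute the block product
\[
  \Var_{x_e=0}\cdot \mathcal{M}^e
  \;=\; \begin{pmatrix} \Var^{e,(-,-)} & 0 \\ 0 & \Var^{e,(+,+)} \end{pmatrix}
        \begin{pmatrix} \mathcal{I}^e_\ell & U^e \\ L^e & \mathcal{I}^e_m \end{pmatrix}
  \;=\; \begin{pmatrix} \Var^{e,(-,-)} & \Var^{e,(-,-)}\cdot U^e \\ \Var^{e,(+,+)}\cdot L^e & \Var^{e,(+,+)} \end{pmatrix}.
\]
The two diagonal blocks already match the corresponding blocks of $\Var$. For the two off-diagonal blocks, \ref{lem:fac1} gives precisely $\Var^{e,(-,-)}\cdot U^e = \Var^{e,(-,+)}$ and $\Var^{e,(+,+)}\cdot L^e = \Var^{e,(+,-)}$, so the product agrees with $\Var$ in every block.

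There is really no obstacle here beyond being careful with the block bookkeeping: the content of the lemma is essentially that setting $x_e=0$ ``zeros out the off-diagonal blocks of $\Var$, which can then be restored by right-multiplication by $\mathcal{M}^e$,'' and the nontrivial input (the combinatorial identity coming from \ref{cor:crucial}) has already been absorbed into \ref{lem:fac1}. The main thing to double-check is that the hypothesis ``$e$ is the maximal element of $E$'' is used consistently with the statement of \ref{lem:fac1} so that the definitions of $U^e$ and $L^e$ (which restrict to separators having $e$ as maximum) coincide with what is needed in the factorization.
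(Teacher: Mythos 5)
Your proof is correct and matches the paper's argument essentially verbatim: both identify $\Var_{x_e=0}$ as the block-diagonal matrix, multiply by $\mathcal{M}^e$ in block form, and appeal to \ref{lem:fac1} to recover the off-diagonal blocks.
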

\begin{proof}
Let $\Tscr$ be in that order, that we get the block decomposition (\ref{equ:blocks}) of $\Var$. Using lemma \ref{lem:fac1}, we see that
\begin{align}\label{eq:ma1}
\Var = \left( 
       \begin{array}{cc} 
         \Var^{e,(-,-)} & \Var^{e,(-,+)} \\ 
         \Var^{e,(+,-)} & \Var^{e,(+,+)} 
       \end{array}
      \right) & = \left( 
       \begin{array}{cc} 
         \Var^{e,(-,-)} & 0 \\ 
         0 & \Var^{e,(+,+)} 
       \end{array}
       \right) \cdot 
       \left( 
       \begin{array}{cc} 
         \mathcal{I}^e_{\ell}  & U^e  \\ 
         L^e  & \mathcal{I}^e_{m} 
       \end{array}
       \right) \\
      &=   \left( 
       \begin{array}{cc} 
	 \Var^{e,(-,-)} & 0 \\ 
	 0 & \Var^{e,(+,+)} 
       \end{array}
       \right) \cdot \mathcal{M}^e .
\end{align}

   Now the monomial $\Var_{P,Q}$ has a factor $x_{e}^+$ or $x_{e}^-$ if and only if
   $P \in \Tscr(\emptyset,\{e\})$ and $Q \in \Tscr(\{e\},\emptyset)$ or
   $P \in \Tscr(\{e\},\emptyset)$ and $Q \in \Tscr(\emptyset,\{e\})$.
   Hence 
   \begin{eqnarray}
     \label{eq:ma2}
     \Var_{x_e = 0}  & = & \left( 
        \begin{array}{cc} 
           \Var^{e,(-,-)} & 0 \\ 
	   0 & \Var^{e,(+,+)} 
	\end{array} 
	\right).
   \end{eqnarray}
   Combining \eqref{eq:ma1} and \eqref{eq:ma2} yields the claim.
\end{proof}

Now we are in position to state and prove the crucial factorization.

\begin{Proposition}
  \label{pro:fac}
  Let $E = \{ e_1 \prec \cdots \prec e_r\}$ be a fixed
  ordering. Then 
  $$\Var = \mathcal{M}^{e_1} \cdots \mathcal{M}^{e_r}.$$ 
\end{Proposition}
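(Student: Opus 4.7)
The plan is to proceed by induction on $r=|E|$. For the base case $r=1$, simplicity of the COM forces $\mathcal{T}=\{(+),(-)\}$, so the off-diagonal entries of $\Var$ are $x_{e_1^+}$ and $x_{e_1^-}$; hence $\Var_{x_{e_1}=0}$ is the $2\times 2$ identity, and \ref{lem:fac} immediately gives $\Var=\mathcal{M}^{e_1}$.

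For the inductive step, I will start from the identity $\Var=\Var_{x_{e_r}=0}\cdot\mathcal{M}^{e_r}$ supplied by \ref{lem:fac} and reduce the proposition to proving $\Var_{x_{e_r}=0}=\mathcal{M}^{e_1}\cdots\mathcal{M}^{e_{r-1}}$. After ordering $\mathcal{T}$ so that $\mathcal{T}(\emptyset,\{e_r\})$ precedes $\mathcal{T}(\{e_r\},\emptyset)$, the proof of \ref{lem:fac} shows that $\Var_{x_{e_r}=0}$ is block-diagonal with blocks $\Var^{e_r,(-,-)}$ and $\Var^{e_r,(+,+)}$. These blocks are precisely the signed Varchenko matrices of the restricted COMs $\rho_{(\emptyset,\{e_r\})}(\mathcal{L})$ and $\rho_{(\{e_r\},\emptyset)}(\mathcal{L})$ on the ground set $E\setminus\{e_r\}$, to both of which the inductive hypothesis applies.

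The next step is to observe that each $\mathcal{M}^{e_i}$ with $i<r$ is itself block-diagonal with respect to the same partition: if $Q$ and $R$ lie in different blocks, then $e_r\in S(Q,R)$, so the strictly smaller element $e_i$ cannot be the maximum of $S(Q,R)$, forcing $\mathcal{M}^{e_i}_{Q,R}=0$. Consequently $\mathcal{M}^{e_1}\cdots\mathcal{M}^{e_{r-1}}$ is block-diagonal, and I would match its two blocks one at a time against the analogous products inside the two restricted COMs. The monomials $\Var_{Q,R}$ match automatically, because within a block $S(Q,R)\subseteq E\setminus\{e_r\}$, and the criterion ``$e_i$ is the maximum of $S(Q,R)$'' depends only on the separator and is therefore unchanged under restriction.

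The main obstacle is to verify that the M\"obius coefficients $\mu((\hat{0},Q)_{R,e_i})$ are also preserved under restriction. For the $(-,-)$ block, fix $R,Q\in\mathcal{T}(\emptyset,\{e_r\})$; any tope $T\preceq_R Q$ satisfies $S(R,T)\subseteq S(R,Q)$, and since $R_{e_r}=Q_{e_r}=-$ forces $e_r\notin S(R,Q)$, one obtains $T_{e_r}=-$. Hence the entire principal order ideal below $Q$ in $\mathcal{T}_R$ lies inside $\mathcal{T}(\emptyset,\{e_r\})$ and, after identifying each tope with its restriction to $E\setminus\{e_r\}$, coincides with the principal order ideal below $Q|_{E\setminus\{e_r\}}$ in the tope poset of $\rho_{(\emptyset,\{e_r\})}(\mathcal{L})$; adjoining $\hat{0}$ and intersecting with $\{T:T_{e_i}=-R_{e_i}\}$ are then identical operations on both sides, so the M\"obius numbers coincide. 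The $(+,+)$ block is handled by the symmetric argument, and two applications of the inductive hypothesis complete the proof.
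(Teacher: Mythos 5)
Your proof is correct, but it takes a genuinely different route from the paper's. The paper performs a downward induction on $i$, proving $\Var = \Var_{x_i=\cdots=x_r=0}\cdot\mathcal{M}^{e_i}\cdots\mathcal{M}^{e_r}$ while never leaving the original COM: the inductive step temporarily reorders $E$ so that $e_{i-1}$ is maximal, invokes \ref{lem:fac} for that ordering to obtain $\Var = \Var_{x_{i-1}=0}\cdot\mathcal{N}$, and then specializes $x_i=\cdots=x_r=0$; the crucial observation is that this specialization kills exactly the entries of $\mathcal{N}$ indexed by pairs whose separator contains some $e_j$, $j\ge i$, turning $\mathcal{N}$ into $\mathcal{M}^{e_{i-1}}$ with respect to the original ordering, because the condition ``$e_{i-1}\in S(Q,R)$'' becomes ``$e_{i-1}$ is maximal in $S(Q,R)$.'' Your proof instead inducts on $|E|$ by recursing into the two topal fibers $\rho_{(\emptyset,\{e_r\})}(\mathcal{L})$ and $\rho_{(\{e_r\},\emptyset)}(\mathcal{L})$, identifying $\Var_{x_{e_r}=0}$ and each $\mathcal{M}^{e_i}$ ($i<r$) as block-diagonal with respect to the $e_r$-partition and matching each block against the corresponding matrix of the fiber. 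The key technical work in your route is precisely where you put it: checking that the intervals $(\hat{0},Q)_{R,e_i}$ stay inside one block, which you establish correctly via $S(R,T)\subseteq S(R,Q)$ forcing $T_{e_r}=R_{e_r}$. The paper's version avoids any discussion of restricted COMs in this proposition (leaving all fiber arguments to \ref{cor:crucial}), so it never has to confront whether the topal fibers remain simple; your version has this as a lingering technicality (a topal fiber of a simple COM can acquire coloops or parallel classes), though on inspection none of the lemmas you cite actually use simplicity, so this is a matter of bookkeeping rather than a genuine gap. In exchange, your argument is more structural, exhibiting the factorization as an instance of the block decomposition already present in \ref{lem:fac}.
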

\begin{proof}
  We will prove by downward induction on $i$ that
  \begin{eqnarray}
     \label{eq:var}
     \Var & = & \Var_{x_i=\cdots = x_r = 0} \cdot \mathcal{M}^{e_i} \cdots \mathcal{M}^{e_r}.
  \end{eqnarray}

  For $i=r$ the assertion follows directly from 
  \ref{lem:fac}.
  For the inductive step assume
  $i > 1$ and \eqref{eq:var} holds for $i$. 
  We know from \ref{lem:fac} that if we choose a linear ordering
  on $E$ for which $e_{i-1}$ is the largest element then 
  \begin{eqnarray}
	    \label{eq:ind1} 
	    \Var & = & \Var_{x_{i-1}=0}\cdot \mathcal{N},
	  \end{eqnarray}
	  where
	  $\mathcal{N} = (N_{Q,R})_{Q,R \in \Tscr}$ is defined as
	  $$N_{Q,R} = \left\{ 
	    \begin{array}{ccc} 
		1 & \mbox{~if~} & Q = R \\ 
	       -\mu((\hat{0},Q)_{R,e_{i-1}})\, \Var_{Q,R} & \mbox{~if~} & e_{i-1} \in \text{Sep}(Q,R)\\

		0 & \mbox{~otherwise} & 
	    \end{array} \right. , 
	  $$
	  Since $\mathcal{N} = \mathcal{M}^{e_{i-1}}$ for this particular ordering. Now we go back to the ordering in the assumption and set $x_i=\cdots   = x_r = 0$ in $\mathcal{N}$. We see that
	  $$(N_{Q,R})_{x_i=\cdots   = x_r = 0} \left\{ 
	    { \begin{array}{cl} 
		1 & \mbox{~if~}  Q = R \\ 
	       -\mu((\hat{0},Q)_{R,e_{i-1}})\, \Var_{Q,R} & \mbox{~if~} e_{i-1} \mbox{~is the largest element in~} S(Q,R) \\
		0 & \mbox{~otherwise}  
	    \end{array}} \right. . 
	  $$
	  But then
	  $\mathcal{N}_{x_i=\cdots   = x_r = 0} = \mathcal{M}^{e_{i-1}}$. 
	 
	  Now \eqref{eq:ind1} implies
	  \begin{eqnarray*}
	      \Var_{x_i=\cdots = x_{r}=0} &= & \Var_{x_{i-1}= \cdots = x_{r} = 0} \cdot
	       \mathcal{N}_{x_i = \cdots = x_r = 0} \\
					  & = & \Var_{x_{i-1}= \cdots = x_{r} = 0} \cdot
	\mathcal{M}^{e_{i-1}} 
	  \end{eqnarray*}

	  With the induction hypothesis this completes the induction step by
	  \begin{eqnarray*}
	     \Var & = & \Var_{x_i=\cdots = x_r = 0} \cdot \mathcal{M}^{e_i} \cdots \mathcal{M}^{e_r} \\
		  & = & \Var_{x_{i-1} = x_i=\cdots = x_r = 0} \cdot \mathcal{M}^{e_{i-1}} \cdots \mathcal{M}^{e_r}. 
	  \end{eqnarray*}

	  For $i =1$ the matrix $\Var_{x_1=\cdots = x_r = 0}$ is the identity matrix. 
	  Thus \eqref{eq:var} yields:
	  $$\Var = \mathcal{M}^{e_1} \cdots \mathcal{M}^{e_r}.$$
	\end{proof}
Before we prove the following proposition, we quote~\cite[Corollary 3]{WH}, which is a result for oriented matroids.

\begin{Lemma}\label{lem:moebius}
Let $\mathbf{0} \in \mathcal{L}$ and let $P \in \Tscr_{R,e}$ such that $e$ does not define a proper face of $P$.
  Then the M\"obius 
  number 
  $\mu((\hat{0},P)_{R,e})$ is $0$ if $-R \neq P$ and   
  $(-1)^{\rank(\mathcal{L})}$ if $-R = P$. 
\end{Lemma}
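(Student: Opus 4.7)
The plan is to apply the Philip Hall Theorem (via \ref{cor:moebius}) and determine the homotopy type of the order complex of $(\hat{0},P)_{R,e}$. By reorienting we may assume $R=\{+\}^{E}$, so that $P_{e}=-$, and the hypothesis becomes that every non-zero covector $X\le P$ satisfies $X_{e}=-$. I would then split the argument according to whether $P=-R$ or not.

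For $P\neq -R$, I would show that $(\hat{0},P)_{R,e}$ is contractible, which yields M\"obius number $0$. This is a closure-operator argument analogous to the proof of \ref{thm:moebius}. Using that $P\neq -R$ and $\mathbf{0}\in\mathcal{L}$, one finds a non-zero covector $Y\in\mathcal{L}$ with $Y\leq P$ and $Y_e=-$ (produced from a suitable face of $P$, which exists because some coordinate $f\ne e$ must define a proper face of $P$ once $P\ne -R$) such that the composition map $Q\mapsto Y\circ Q$ is an order-preserving, idempotent self-map of $(\hat{0},P)_{R,e}$, the gating property being verified via \ref{gateCOM}. Its image has $P$ as unique maximum and is therefore contractible, so by \cite[Corollary~10.12]{Bj} the whole interval is contractible.

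For $P=-R$, the interval $(\hat{0},-R)_{R,e}$ consists of $\hat{0}$ together with all topes $T\ne -R$ satisfying $T_{e}=-$, ordered by $\preceq_R$. The combination of $\mathbf{0}\in\mathcal{L}$ and the hypothesis on $e$ forces every non-zero covector below $-R$ to have $e$-coordinate equal to $-$, a strong structural restriction at the element $e$. I would identify the order complex of this subposet of $\mathcal{T}_R$ with the proper part of a tope poset of an OM of rank $\rank(\mathcal{L})-1$, and invoke the classical sphericity of the proper tope poset of an OM (via the Folkman--Lawrence topological representation theorem) to conclude that it is homotopy equivalent to the sphere $S^{\rank(\mathcal{L})-2}$. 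Its reduced Euler characteristic is $(-1)^{\rank(\mathcal{L})-2}=(-1)^{\rank(\mathcal{L})}$, yielding the claimed M\"obius number via \ref{cor:moebius}. The main obstacle lies in this last case: one must identify $\{T\in\mathcal{T}:T_e=-,\,T\ne -R\}$ with a tope poset of a smaller OM in a topologically faithful way, so that the sphericity argument applies and the exponent $\rank(\mathcal{L})-2$ emerges correctly.
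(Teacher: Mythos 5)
The paper does not prove this lemma at all --- it is quoted verbatim from Hochst\"attler--Welker \cite[Corollary~3]{WH} (the sentence preceding the statement says exactly this). So there is no in-paper proof to compare against; your attempt has to stand on its own, and it currently has gaps in both cases.

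For $P\neq -R$: you want to run the closure-operator argument of \ref{thm:moebius}, but the choice $Y\le P$ with $Y_e=-$ does not give a closure (or coclosure) operator. In \ref{thm:moebius} the crucial facts are that $Y\le R$ (so $Y^-=\emptyset$ and $Y\circ Q\preceq_R Q$, i.e.\ $\circ_Y$ is \emph{decreasing}) and that $P_{top}\notin\cstar(Y)$ (so $\circ_Y$ never produces $P_{top}$ and hence stays inside the \emph{open} interval). Neither holds for your $Y$: since $Y\le P$ and $Y_e=-$, $\circ_Y$ can both create and destroy separating elements relative to $R$, so it is neither increasing nor decreasing in $\preceq_R$ and \cite[Corollary~10.12]{Bj} does not apply; and because $Y\le P$ means $P\in\cstar(Y)$, $Y\circ Q$ equals $P$ whenever $Q$ agrees with $P$ on $z(Y)$, so $\circ_Y$ does not even map the open interval $(\hat0,P)_{R,e}$ into itself. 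Your claim that ``its image has $P$ as unique maximum'' is a symptom of this: $P$ is not an element of the open interval, so it cannot be the maximum of the image of a self-map of $(\hat0,P)_{R,e}$. Note the hypothesis here is that $e$ does \emph{not} define a proper face of $P$, which is precisely the regime where the $\circ_Y$-trick of \ref{thm:moebius} is unavailable; a different idea is needed (e.g.\ reduce to a smaller arrangement, or compare against \ref{thm:moebius} applied to a covector below $R$).

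For $P=-R$: you flag this as the main obstacle yourself and do not carry it out, but beyond that the intended identification is internally inconsistent. The proper part $(R,-R)$ of the tope poset of a rank-$r$ oriented matroid is homotopy equivalent to $S^{r-2}$ (Edelman--Walker, see \cite{thebook}), so comparing $\{Q\in\Tscr : Q_e=-R_e,\ Q\neq -R\}$ with ``the proper part of the tope poset of an OM of rank $\rank(\mathcal{L})-1$'' would give $S^{\rank(\mathcal{L})-3}$ and reduced Euler characteristic $-(-1)^{\rank(\mathcal{L})}$ --- the wrong sign. To make the spherical route work, the comparison should be with $(R,-R)$ in the tope poset of $\mathcal{L}$ itself, and one then must actually prove, using that $e$ defines no proper face of $R$ (equivalently of $-R$), that restricting to the half $\{Q:Q_e=-R_e\}$ does not change the homotopy type of the open interval. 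That homotopy equivalence is the substantive content of the lemma and is missing from the proposal.
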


Now let $Y \in \mathcal{L}$ and $e \in z(Y)$ be the maximal element of
$z(Y)$. Define $\Tscr^{Y,e}$ as the set of topes $P \in \Tscr$
such that $Y$ is the maximal element of $\mathcal{L}$ for which $Y_e = 0$ and $Y < P$.
	 
\begin{Proposition}\label{pro:contraction}
  For any pair of topes  $Q,R \in \Tscr^{Y,e}$ we have 
  \begin{eqnarray*} 
     \mu((\hat{0},Q)_{R,e}) = 
       \left\{ \begin{array}{ccc} 
          (-1)^{\rank(\mathcal{L}|_{z(Y)})} & \mbox{~if~} & Q_{z(Y)} = -R_{z(Y)} \\ 
          0 & \mbox{~otherwise~} & 
  \end{array} \right. .  
  \end{eqnarray*} 
  
\end{Proposition}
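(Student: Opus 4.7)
The plan is to reduce the Möbius computation inside $\mathcal{L}$ to an analogous one inside the OM $F(Y)|_{z(Y)}$ and then invoke \ref{lem:moebius}. Observe first that $F(Y)|_{z(Y)} = \mathcal{L}|_{z(Y)}$ as sets of sign vectors on $z(Y)$, since $(Y\circ Z)|_{z(Y)} = Z|_{z(Y)}$ for every $Z\in\mathcal{L}$; in particular $\rank(F(Y)|_{z(Y)}) = \rank(\mathcal{L}|_{z(Y)})$, matching the exponent in the statement.

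Mimicking the closure-operator step in the proof of \ref{thm:moebius}, I would define $\circ_Y\colon(\hat 0,Q)_{R,e}\to(\hat 0,Q)_{R,e}$ by fixing $\hat 0$ and sending a tope $P$ to $Y\circ P$. Since $Y\le R$, one has $S(R,Y\circ P)=S(R,P)\cap z(Y)$, so $Y\circ P\preceq_R P\preceq_R Q$, and $(Y\circ P)_e = P_e = -R_e$ because $e\in z(Y)$, keeping $Y\circ P$ in the interval. Order-preservation, idempotency and the contractivity $Y\circ P\preceq_R P$ are routine, so $\circ_Y$ is a closure operator and by \cite[Corollary 10.12]{Bj} the interval $(\hat 0,Q)_{R,e}$ is homotopy equivalent to its image $\{\hat 0\}\cup\bigl(\cstar(Y)\cap(\hat 0,Q)_{R,e}\bigr)$; by \ref{cor:moebius} the two Möbius numbers agree.

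Next I would show that restriction $P\mapsto P|_{z(Y)}$ is a poset isomorphism from this image onto the interval $(\hat 0,Q|_{z(Y)})_{R|_{z(Y)},e}$ in the tope poset of $F(Y)|_{z(Y)}$. Every $P\in\cstar(Y)$ satisfies $P|_{\underline Y}=Y|_{\underline Y}=R|_{\underline Y}$, so $S(R,P)\subseteq z(Y)$ and in fact $S(R,P)=S(R|_{z(Y)},P|_{z(Y)})$; this transports $\preceq_R$ to $\preceq_{R|_{z(Y)}}$, and bijectivity is clear since $P$ is recovered from $P|_{z(Y)}$ by prepending $Y$ on $\underline Y$. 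It remains to apply \ref{lem:moebius} inside $F(Y)|_{z(Y)}$ (which is an OM by \ref{Note:OM}, in particular contains $\mathbf 0$). To verify its hypothesis, suppose some non-zero $X\in F(Y)|_{z(Y)}$ satisfies $X\le Q|_{z(Y)}$ and $X_e=0$; lifting to $Z\in F(Y)\subseteq\mathcal{L}$ with $Z|_{z(Y)}=X$ and using $Z|_{\underline Y}=Y|_{\underline Y}=Q|_{\underline Y}$ yields $Y<Z\le Q$ with $Z_e=0$, contradicting the defining maximality of $Y$ in $\mathcal{T}^{Y,e}$. Thus \ref{lem:moebius} gives $0$ unless $-R|_{z(Y)}=Q|_{z(Y)}$, and otherwise $(-1)^{\rank(F(Y)|_{z(Y)})}=(-1)^{\rank(\mathcal{L}|_{z(Y)})}$, as claimed.

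The one genuinely delicate step is the transfer argument: combining the closure operator $\circ_Y$ with the restriction $P\mapsto P|_{z(Y)}$ to realise the tope interval in $\mathcal{L}$ as the tope interval of the contracted OM. Once this is in place, the remainder is routine bookkeeping on separators together with an application of tools already developed in \ref{sec:3} and \ref{thm:moebius}.
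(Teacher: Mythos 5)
Your proof is correct and follows essentially the same route as the paper: identify the interval $(\hat 0,Q)_{R,e}$ with the corresponding interval in the OM $\mathcal{L}|_{z(Y)}=F(Y)|_{z(Y)}$, verify via the maximality of $Y$ that $e$ does not define a proper face of $Q|_{z(Y)}$, and invoke \ref{lem:moebius}. The only difference is that your closure-operator step is a no-op here and can be dropped: since $Y\le Q,R$ one has $S(R,Q)\subseteq z(Y)$, so any tope $P$ with $\hat 0<_{R,e}P\preceq_R Q$ already satisfies $S(R,P)\subseteq z(Y)$, hence $P|_{\underline Y}=Y|_{\underline Y}$ and $P\in\cstar(Y)$; thus $\circ_Y$ is the identity on the open interval and the restriction $P\mapsto P|_{z(Y)}$ is directly a poset isomorphism onto $(\hat 0,Q|_{z(Y)})_{R|_{z(Y)},e}$, which is exactly what the paper asserts.
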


\begin{proof}
  By the definition of $\Tscr^{Y,e}$ we have $Y \le Q,R$, so $Y_e = Q_e = R_e$ for all $e \in E\backslash z(Y)$. Thus, if we consider the poset $\Tscr_{R|_{z(Y)},e}$ in the restriction $\mathcal{L}|_{z(Y)}$ we find that the interval $(\hat{0},Q)_{R,e}$ is isomorphic to $(\hat{0}, Q|_{z(Y)})_{ R|_{z(Y)},e}$, since the elements in $(\hat{0},Q)_{R,e}$ only differ in $z(Y)$. We saw in  \ref{sec:2}, that $(E\backslash \underline{Y}, F(Y)\backslash \underline{Y})$ is an OM. Further, $\Tscr_{R|_{z(Y)},e}$ is a poset and $(\hat{0}, Q|_{z(Y)})_{ R|_{z(Y)},e}$ is an interval in this particular OM. Furthermore, since $Y$
is the maximal element satisfying $Y_e=0$ and $Y \le Q$, $e$ does not define a proper face of $Q|_{z(Y)}$. Since our interval is in an OM, we can use \ref{lem:moebius} and the claim follows.
\end{proof}
We define $b_{Y,e} = 0$ if $e$ is not the maximal element of 
$z(Y)$ and $\frac{1}{2} \# \Tscr^{Y,e}$ otherwise. 
Since $P$ together with $Y \circ (-P)$ is a perfect pairing on 
$\Tscr^{Y,e}$ it follows that $\Tscr^{Y,e}$ contains an even number of
topes. In particular, $b_{Y,e}$ is a nonnegative integer. 
We denote by $\mathcal{M}^{Y,e}$ the submatrix of $\mathcal{M}^e$ obtained by selecting rows and columns indexed by $\Tscr^{Y,e}$.  

\begin{Lemma}
  \label{lem:formula}
  Let $Y \in \mathcal{L}$ and $e \in z(Y)$.
  If $\Tscr^{Y,e} \neq \emptyset$. 
  then 
  $$\det(\mathcal{M}^{Y,e}) = (1-a(Y))^{b_{Y,e}}$$
  where $a(Y) := \prod_{e \in z(Y)} x_{e}^+ x_{e}^-$.
\end{Lemma}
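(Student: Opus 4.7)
The plan is to show that after an appropriate reordering of rows and columns, $\mathcal{M}^{Y,e}$ becomes block diagonal with $b_{Y,e}$ identical $2\times 2$ blocks, each having determinant $1-a(Y)$. Multiplicativity of the determinant then yields the claim.

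First, I would analyze which off-diagonal entries of $\mathcal{M}^{Y,e}$ can be nonzero. Let $Q,R\in \mathcal{T}^{Y,e}$ with $Q\neq R$. By \ref{pro:contraction}, $\mu((\hat{0},Q)_{R,e}) = 0$ unless $Q|_{z(Y)} = -R|_{z(Y)}$, in which case it equals $(-1)^{r}$, where $r = \rank(\mathcal{L}|_{z(Y)})$. Since $Y\le Q,R$ by the definition of $\mathcal{T}^{Y,e}$, the topes $Q$ and $R$ both agree with $Y$ outside $z(Y)$; thus the condition $Q|_{z(Y)} = -R|_{z(Y)}$ forces $S(Q,R) = z(Y)$. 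Because $\mathcal{M}^{Y,e}$ is of interest only when $e$ is the maximal element of $z(Y)$ (otherwise $b_{Y,e}=0$ and there is nothing to prove), $e$ is also the maximal element of $S(Q,R)$, so the defining condition of the off-diagonal entries of $\mathcal{M}^e$ is satisfied.

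Second, for each $R\in \mathcal{T}^{Y,e}$, the unique tope with $Q|_{z(Y)} = -R|_{z(Y)}$ and agreeing with $R$ (equivalently, with $Y$) on $E\setminus z(Y)$ is $Q = Y\circ(-R)$, which itself lies in $\mathcal{T}^{Y,e}$ by the perfect pairing $P\mapsto Y\circ(-P)$ described just before the lemma. I would then order $\mathcal{T}^{Y,e}$ as pairs $(P_1,P_1'),\dots,(P_{b_{Y,e}},P_{b_{Y,e}}')$ with $P_i' := Y\circ (-P_i)$. With this ordering, $\mathcal{M}^{Y,e}$ is block diagonal with $b_{Y,e}$ blocks of shape
$$B_i = \begin{pmatrix} 1 & (-1)^{r+1}\,\Var_{P_i,P_i'} \\ (-1)^{r+1}\,\Var_{P_i',P_i} & 1 \end{pmatrix}.$$

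Finally, since on $z(Y)$ we have $(P_i')_f = -(P_i)_f$, we get
$$\Var_{P_i,P_i'}\cdot \Var_{P_i',P_i} = \prod_{f\in z(Y)} x_{f^{(P_i)_f}}\, x_{f^{-(P_i)_f}} = \prod_{f\in z(Y)} x_{f^+}x_{f^-} = a(Y),$$
so $\det(B_i) = 1 - \bigl((-1)^{r+1}\bigr)^2 a(Y) = 1 - a(Y)$, and multiplying over the $b_{Y,e}$ blocks gives $\det(\mathcal{M}^{Y,e}) = (1-a(Y))^{b_{Y,e}}$. I expect no real obstacle: all the heavy lifting (the Möbius computation and the existence of the perfect pairing) is contained in \ref{pro:contraction} and the paragraph preceding the lemma, so the argument reduces to a reordering plus a $2\times 2$ determinant.
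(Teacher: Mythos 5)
Your proposal is correct and follows essentially the same route as the paper's proof: apply \ref{pro:contraction} to identify the nonzero off-diagonal entries, pair each $R$ with $Y\circ(-R)$ to get a block-diagonal form with $2\times 2$ blocks, and compute each block determinant as $1-a(Y)$. Two cosmetic remarks: the blocks are not literally \emph{identical} (their entries $\Var_{P_i,P_i'}$ depend on $P_i$) though each has determinant $1-a(Y)$; and in the case that $e$ is not the largest element of $z(Y)$ it is cleaner to note explicitly that all off-diagonal entries vanish (since $S(Q,R)=z(Y)$ has a larger maximum than $e$), so $\mathcal{M}^{Y,e}$ is the identity and $\det=1=(1-a(Y))^{0}$, rather than saying "there is nothing to prove."
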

\begin{proof}
If $Q_{z(Y)} = -R_{z(Y)}$ then $\Var_{Q,R} = \prod_{e \in z(Y), Q_e = *} x_{e}^*$.
  Using the definition of $\mathcal{M}^e$ and ~\ref{pro:contraction} we find
  \begin{eqnarray*} 
    \mathcal{M}^{Y,e}_{Q,R} = 
       \left\{ 
          \begin{array}{ccc} 
                             1 & \mbox{~if~} & Q = R \\ 
             -(-1)^{\rank(\mathcal{L}|_{z(Y)})} \prod_{e \in z(Y), Q_e = *} x_{e}^* & \mbox{~if~} & Q = Y \circ (-R) \\
                                       &             & e \mbox{ largest element of } S(Q,R) \\
                             0 & \mbox{~otherwise~} & 
          \end{array} \right. .
  \end{eqnarray*} 
  We order rows and columns of $\mathcal{M}^{Y,e}$ so that the elements 
  $R$ and $Y \circ (-R)$ are  
  paired in consecutive rows and columns.
  With this ordering $\mathcal{M}^{Y,e}$ is a block diagonal
  matrix having along its diagonal $b_{Y,e}$ two by two matrices 
  $$\left( \begin{array}{cc}
        1                         & -(-1)^{\rank(\mathcal{L}|_{z(Y)})} \prod_{e \in z(Y), R_e = *} x_{e}^* \\
        -(-1)^{\rank(\mathcal{L}|_{z(Y)})} \prod_{e \in z(Y), -R_e = *} x_{e}^* & 1 
           \end{array} \right)
  $$
  if $e$ is the maximal element of $z(Y)$ and 
  identity matrices otherwise.  In any case we find
  $\det(\mathcal{M}^{Y,e}) = (1-a(Y))^{b_{Y,e}}$ as desired.
\end{proof}

\begin{Lemma}
  \label{lem:block}
  After suitably ordering $\Tscr$ the matrix $\mathcal{M}^e$ is the block 
  lower triangular matrix
  with the matrices $\mathcal{M}^{Y,e}$ for $Y \in \mathcal{L}$ with $Y_e = 0$ and
  $\Tscr^{Y,e} \neq \emptyset$ on the main
  diagonal.
\end{Lemma}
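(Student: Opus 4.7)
The plan is to produce a linear ordering on $\mathcal{T}$ under which $\mathcal{M}^e$ acquires the stated block form. For each tope $P$ for which $e$ defines a proper face, let $Y(P,e)$ denote the unique maximal element of $\{X \in \mathcal{L} : X \leq P,\ X_e = 0,\ X \neq \mathbf{0}\}$ (which exists by closure of $\mathcal{L}$ under composition); then $P$ lies in exactly the set $\mathcal{T}^{Y(P,e),e}$. Hence the nonempty sets $\mathcal{T}^{Y,e}$ are pairwise disjoint, and topes for which $e$ does not define a proper face will be inserted as singleton $1\times 1$ identity blocks at appropriate positions.

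Next I would fix any linear extension $\trianglelefteq$ of the covector order restricted to $\{Y \in \mathcal{L} : Y_e = 0,\ \mathcal{T}^{Y,e}\neq\emptyset\}$, with $\trianglelefteq$-smaller $Y$'s placed first, and order $\mathcal{T}$ so that each $\mathcal{T}^{Y,e}$ is a contiguous segment arranged according to $\trianglelefteq$. The diagonal block of $\mathcal{M}^e$ indexed by $\mathcal{T}^{Y,e}\times\mathcal{T}^{Y,e}$ is then by construction precisely $\mathcal{M}^{Y,e}$, so it suffices to show that all blocks strictly above the diagonal vanish. The key claim is: for $Q \in \mathcal{T}^{Y_Q,e}$ and $R \in \mathcal{T}^{Y_R,e}$ in distinct blocks, $\mathcal{M}^e_{Q,R}\neq 0$ forces $Y_R \leq Y_Q$ as covectors, hence $Y_R \trianglelefteq Y_Q$, and the entry lies below (not above) the block diagonal.

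To prove this claim, observe that a nonzero off-diagonal entry $\mathcal{M}^e_{Q,R}$ requires $e = \max S(Q,R)$ and $\mu((\hat{0}, Q)_{R,e}) \neq 0$. Applying \ref{thm:moebius} with base tope $R$, the latter condition forces $Q \in \cstar(Y_R)$, i.e.\ $Y_R \leq Q$; combined with $(Y_R)_e = 0$ and the maximality of $Y_Q$ among covectors $\leq Q$ with zero $e$-coordinate, this yields $Y_R \leq Y_Q$. The main obstacle is rigorously handling topes $P$ for which $e$ does not define a proper face: one must verify that their rows and columns in $\mathcal{M}^e$ carry only the diagonal entry $1$, so that they genuinely act as trivial $1\times 1$ diagonal blocks. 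This should follow from a parallel Möbius-number vanishing argument, using either the techniques underlying \ref{thm:moebius} or a direct analysis of the (now degenerate) poset $\mathcal{T}_{P,e}$.
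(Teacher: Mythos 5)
Your main argument coincides exactly with the paper's: order $\mathcal{T}$ so that $\mathcal{T}^{Y,e}$ precedes $\mathcal{T}^{Y',e}$ whenever $Y<Y'$, and kill the above-diagonal entries by invoking \ref{thm:moebius} (if $Q\notin\cstar(Y_R)$ the M\"obius number vanishes; if $Q\in\cstar(Y_R)$ the maximality of $Y_Q$ forces $Y_R\le Y_Q$, a contradiction). So the core is correct.

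Where you differ from the paper is in flagging the topes $P$ for which $e$ does not define a proper face. This is a genuine subtlety -- the paper itself dismisses it with the claim that $\mathcal{T}^{\mathbf{0},e}=\emptyset$, and that claim is in fact \emph{false}: in the oriented matroid $\mathcal{C}_3$ of \ref{ex:cycle}, taking $e=3$, the covectors $Y\in\mathcal{L}$ with $Y_3=0$ and $Y\le(+,+,-)$ are just $\mathbf{0}$, so $(+,+,-)\in\mathcal{T}^{\mathbf{0},3}$, and likewise $(-,-,+)$. So you are right to be suspicious here.

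However, your proposed repair does not work. You claim that for such a tope $P$ the row \emph{and column} of $P$ in $\mathcal{M}^e$ carry only the diagonal $1$, so that $P$ can be treated as a $1\times1$ identity block. That is wrong in general. When $\mathbf{0}\in\mathcal{L}$, these topes come in antipodal pairs $P,-P$, and by \ref{pro:contraction} (applied with $Y=\mathbf{0}$) one has $\mu((\hat{0},P)_{-P,e})=(-1)^{\rank\mathcal{L}}\ne 0$, hence $\mathcal{M}^e_{P,-P}\ne 0$; concretely in $\mathcal{C}_3$ with $e=3$, $\mathcal{M}^3_{(+,+,-),(-,-,+)}=-x_{1^+}x_{2^+}x_{3^-}\neq 0$. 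So these topes do \emph{not} form $1\times1$ blocks: they form the single block $\mathcal{M}^{\mathbf{0},e}$, which is $2k\times 2k$ and whose determinant is $(1-a(\mathbf{0}))^{b_{\mathbf{0},e}}$ by \ref{lem:formula}. The correct fix is to keep $Y=\mathbf{0}$ in the list of diagonal blocks and place $\mathcal{T}^{\mathbf{0},e}$ first in the ordering (it is the minimum of the covector order, so this is consistent with the rest); the above-diagonal vanishing argument then applies unchanged because for $R$ in any other block $Y_R\neq\mathbf{0}$, so $e$ does define a proper face of $R$ and \ref{thm:moebius} is applicable. In the COM case with $\mathbf{0}\notin\mathcal{L}$ there may still be topes not lying in any $\mathcal{T}^{Y,e}$; for those, only the \emph{row} can be shown to vanish off-diagonal against later blocks (the column need not), and a separate argument is needed that they do not obstruct each other -- this is a gap your sketch shares with the paper and which neither resolves.
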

\begin{proof}
Note that for each tope $T$ there is exactly one $Y$ with $T \in \mathcal{T}^{Y,e}$, and
that $\mathcal{T}^{0,e} = \emptyset$. We fix a linear ordering of $\Tscr$ such that for each $Y \in \mathcal{L}$
the topes from $\Tscr^{Y,e}$ form an interval and such that the topes from $\Tscr^{Y,e}$
precede those of $\Tscr^{Y',e}$ if $Y < Y'$.

  For this order the claim follows if we show that the entry $(\mathcal{M}^e)_{Q,R}$ is zero 
  whenever $Q \in \Tscr^{Y',e}$, $R \in \Tscr^{Y,e}$ and $Y' < Y$.

  If $Q_e = R_e$ then by $Q \neq R$ we have 
  $(\mathcal{M}^e)_{Q,R} = 0$. Hence it suffices to consider the case $Q_e \neq R_e$. Since $Y \neq \mathbf{0}$, $e$ is a proper face of $R$.
 
 If $Q \not\in \cstar(Y),\, Q \in \Tscr(\emptyset,\{e\})$ and $R\in \Tscr(\{e\},\emptyset)$ then 
  it follows from \ref{thm:moebius} that 
  $\mu((\hat{0},Q)_{R,e}) = 0$ and therefore $(\mathcal{M}^e)_{Q,R} = 0$. 
  Analogously if $Q \not\in \cstar(Y),\,Q \in \Tscr(\{e\},\emptyset)$ and $R\in \Tscr(\emptyset,\{e\})$ then 
  $\mu((\hat{0},Q)_{R,e}) = 0$ and therefore $(\mathcal{M}^e)_{Q,R} = 0$. 

  On the other hand, if $Q \in \cstar(Y)$, then in particular $Y \leq
  Q$. Since by definition of $\Tscr^{Y',e}$ we have that $Y'$ is the
  maximal covector such that $Y' \leq Q$ and $Y'_e=0$ it follows that
  $Y \leq Y'$.  Since $Y \neq Y'$ we must have that $Y < Y'$, i.e.\
  $(\mathcal{M}^e)_{Q,R}$ is an entry below the diagonal and we are done.

\end{proof}

\begin{proof}[Proof of \ref{thm:varchenko}]
  After fixing a linear order on $E$ it follows from \ref{pro:fac} 
  that $\det \Var$ is 
  the product of the determinants of $\mathcal{M}^e$ for $e \in E$.  
  By \ref{lem:block} the determinant of each $\mathcal{M}^e$ is
  a product of determinants of $\mathcal{M}^{Y,e}$ for $e \in E$ and $Y \in \mathcal{L}$
  for which $\Tscr^{Y,e} \neq \emptyset$. Then \ref{lem:formula} completes
  the proof.
\end{proof}

\begin{Remark}[Description of $b_Y$]\label{note:bY}
In \ref{thm:varchenko} we describe $b_Y$ as a nonnegative integer, but this can be made more precise: Fix any linear order on $E$ and let $e_Y$ be the maximal element of $z(Y)$. From \ref{lem:formula} we deduce, that $b_Y = b_{Y,e_Y}$. Thus $2b_Y
$ counts the topes $P \in \Tscr$
such that $Y$ is the maximal element of $\mathcal{L}$ for which $Y_{e_Y} = 0$ and $Y \leq P$. In particular, $b_Y$ does not depend on the choice of the linear ordering on $E$.
\end{Remark}

\section{Applications}\label{sec:applications}
We give two applications of our formula for the Varchenko determinant on two COMs associated to a poset $\mathcal{P}$: its lattice of ideals and its set of linear extensions. As an example we will use the poset $\mathcal{Q}$ in Figure~\ref{fig:poset}.
 \begin{center}
 \begin{figure}
 \includegraphics[width=\textwidth]{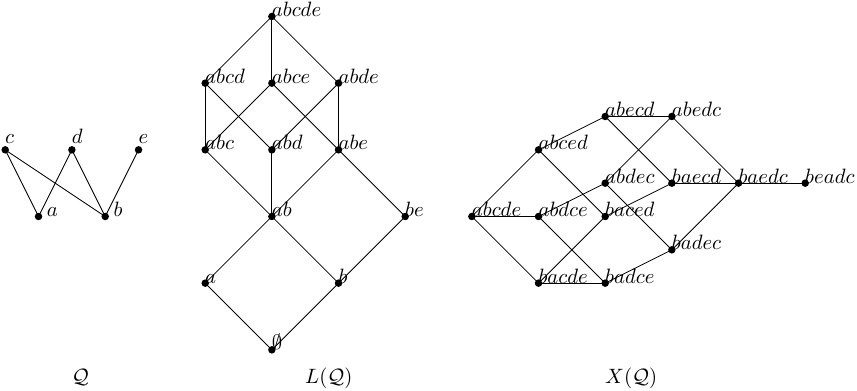}
 \caption{A poset $\mathcal{Q}$, its lattice $L(\mathcal{Q})$ of ideals and its set $X(\mathcal{Q})$ of linear extensions. Edges in the graphs in the middle and on the right are drawn if endpoints correspond to topes with separator consisting of a single element. Edges corresponding to the same element are parallel.}\label{fig:poset}
  
 \end{figure}

 \end{center}

\subsection{Distributive Lattices}
By the Fundamental Theorem of Finite Distributive Lattices, for every distributive lattice $L$ there exists a poset $\mathcal{P}$, such that ordering the ideals (downward closed sets) of $\mathcal{P}$ by inclusion yields a lattice isomorphic to $L$. The topes of the COM associated to $L$ correspond to the ideals of $\mathcal{P}$, the empty set can be seen as the all-plus vector, the ground set $E$ of this COM is the ground set of $\mathcal{P}$, and the separator of two ideals $I,I'$ is the symmetric difference $I\Delta I'$. So this allows, to quickly write down the (unsigned) Varchenko matrix $\mathbf{V}_L$ of $L$. In our example we indicate $\mathbf{V}_L(\mathcal{P})$ in the following way, where we just display the elements of the symmetric difference of two ideals to make it easier to read. Note that in order to get the Varchenko matrix itself one has to exchange a string $s_1 \ldots s_k$ for the product $\prod_{s\in S}x_s$. The $\emptyset$ translates therefore to the empty product, which is 1.

$$
\begin{pmatrix}
 \emptyset & a & b & ab & be & abc & abd & abe & abcd & abce & abde & abcde\\
 a & \emptyset & ab &b & abe& bc & bd & be & bcd & bce & bde & bcde &\\
 b& ab & \emptyset & a & e & ac & ad & ae & acd & ace & ade & acde \\
 ab & b & a & \emptyset & ae & c & d &  e & cd & ce & de & cde \\
 be & abe & e & ae & \emptyset & ace & ade & a & acde & ac & ad & acd \\
 abc & bc & ac & c & ace  & \emptyset & cd & ce & d & e & cde & de \\
 abd & bd & ad & d & ade & cd & \emptyset & de & c & cde & e & ce \\
 abe & be & ae & e & a & ce & de & \emptyset & cde & c & d & cd \\
 abcd & bcd & acd & cd & acde & d & c & cde & \emptyset & de & ce & e \\
 abce & bce & ace & ce & ac & e & ced & c & de & \emptyset & cd & d \\
 abde & bde & ade & de & ad & cde & e & d & ce & cd & \emptyset & c \\
 abcde & bcde & acde & cde & acd & de & ce & cd & e & d & c & \emptyset 
\end{pmatrix}$$

Let us define the covectors of that COM: Let $I \subseteq I'$ be two ideals such that $I'\backslash I$ forms an antichain. Then these two ideals define a covector $Y$ by setting
\begin{align*}
    Y(I,I')_e = \begin{cases}
        - \text{ if }e \in I\\
        0 \text{ if }e \in I'\backslash I\\
        + \text{ otherwise. }\\
    \end{cases}
\end{align*}

In particular, when $I=I'$, we get a tope corresponding to the ideal $I$ and the all-plus tope corresponds to the empty ideal.

Now, if we pick a linear ordering on $E$, let $e_Y$ be the largest element of $I'\setminus I$, then $2b_{Y(I,I')}$ counts those ideals $K$ such that we have 
\begin{itemize}
 \item $I\subseteq K\subseteq I'$,
 \item if $J\subseteq K\subseteq J'$ and $e_Y$ is the largest element of $J'\setminus J$, then $I'\setminus I \subsetneq J'\setminus J$.
\end{itemize}
But note that this condition is only satisfied if $I=K$ and $I'=I\cup \{e\} $ for some $e\in\mathcal{P}$ or $I'=K$ and $I=I'\setminus \{e\} $ for some $e\in\mathcal{P}$. Indeed, if otherwise $I\subsetneq K \subsetneq I'$ and $e_Y$ is the largest element of $I'\setminus I$ one can set $J = K \setminus \{e_Y\}$ and $J'=K \cup \{e_Y\}$, a contradiction to the above condition. 
Hence $b_{Y(I,I')}$ is $1$ if $|I'\setminus I|=1$ and $0$ otherwise. All pairs $I,I'$ with $|I'\setminus I|=1$ look like $I = I'\backslash \{p\}$, where $p$ is a maximal element of $I'$.

Thus,~\ref{cor:varchenko} and~\ref{note:bY} yield that 
  \begin{align*}
     \det (\mathbf{V}_L) = \prod_{I \in \mathcal{I}} \prod_{p\in \max(I)}(1-x_p^2)=\prod_{p\in \mathcal{P}}(1-x_p^2)^{m_p},
   \end{align*}
where $\mathcal{I}$ denotes the set of ideals of $\mathcal{P}$, $\max(I)$ the set of maximal elements of an ideal $I$ and $m_p$ denotes the number of ideals having $p$ as maximal element.
In our example we get the following formula for $\det (\mathbf{V}_{L(\mathcal{Q})})$: 

\noindent$(1-x_a^2) \cdot(1-x_b^2)\cdot((1-x_a^2)(1-x_b^2))\cdot(1-x_e^2)\cdot(1-x_c^2)\cdot(1-x_d^2)\cdot((1-x_a^2)(1-x_e^2))\cdot((1-x_c^2)(1-x_d^2))\cdot((1-x_c^2)(1-x_e^2))\cdot((1-x_d^2)(1-x_e^2))\cdot((1-x_c^2)(1-x_d^2)(1-x_e^2))=$
\[(1-x_a^2)^3 (1-x_b^2)^2  (1-x_c^2)^4 (1-x_d^2)^4 (1-x_e^2)^5.\]

\subsection{Linear extensions}

Another instance is the ranking COM of a poset $\mathcal{P}$, that was described in~\cite{BCK}. The topes are the linear extensions of $\mathcal{P}$, and the separator of two linear extensions $L, L'$ is the set of pairs of elements of $\mathcal{P}$ that are ordered differently in $L$ and $L'$. In particular, the ground set of this COM consists of the set $\mathrm{Inc}(\mathcal{P})$ of incomparable pairs of $\mathcal{P}$, e.g., $\mathrm{Inc}(\mathcal{Q})=\{ab, ae, cd, ce, de\}$. We can thus define the (unsigned) Varchenko matrix $\mathbf{V}_{X(\mathcal{P})}$. 
We get a description of $\mathbf{V}_{X(\mathcal{Q})}$. We deem it too large to display it entirely, but for example the entry corresponding to extensions $abcde, beadc$ is $x_{ab}x_{ae}x_{cd}x_{ce}x_{de}$.

The covectors of the ranking COM are the weak extensions of $\mathcal{P}$, i.e., those poset extensions of $\mathcal{P}$ that are chains of antichains. The set $z(Y)$ of such an extension $Y$ corresponds to its set of incomparable pairs $\mathrm{Inc}(Y)$. In order to properly define the signs of the covectors, one can pick an arbitrary linear extension $L_0$ of $\mathcal{P}$, and set non-zero coordinates of $Y$ to $+$ if the corresponding incomparable pair of $\mathcal{P}$ is ordered the same way in $L_0$ and $Y$ and to $-$ otherwise. To define $b_Y$ we can fix an arbitrary linear order on the set $\mathrm{Inc}(\mathcal{P})$ and let $e_Y=\{p,q\}$ be the largest element of $\mathrm{Inc}(Y)$. Then $2b_{Y}$ counts linear extensions $L$ of $\mathcal{P}$ such that
\begin{itemize}
 \item $L$ is a linear extension of $Y$,
 \item if another weak extension $Z$ of $\mathcal{P}$ has $e_Y$ as largest incomparable pair, then either $L$ is not an extension of $Z$ or $Z$ is not an extension of $Y$.
\end{itemize}
In this setting one can see that no such $Z$ can exist if and only if $Y$ is a chain of antichains only one of which - say $A$ - has size larger than $1$. In this case the feasible $L$ are extensions of $Y$ that extend $A$ by starting and ending with an element among $\{p,q\}$. Hence, there are $2(|A|-2)!$ such linear extensions. By~\ref{cor:varchenko} and~\ref{note:bY} we have 
  \begin{align*}
     \det (\mathbf{V}_{\mathcal{P}}) = \prod_{A \in \mathcal{A}_{\geq 2}} (1-\prod_{p\neq q\in A}x_{p,q}^2)^{(|A|-2)!},
   \end{align*}
where $\mathcal{A}_{\geq 2}$ denotes the set of antichains of size at least $2$ of $\mathcal{P}$.

\section{Conclusion}\label{sec:conclusions}
One might wonder to what extent our result could be further generalized to other classes. A natural next class are partial cubes, i.e., isometric subgraphs of the hypercube $Q_d$. These generalize (tope graphs of) COMs and allow for an analogous definition of the Varchenko matrix, where the $(u,v)$ entry contains a product of monomials indexed by those coordinates in $\{1,\ldots, d\}$ where $u$ and $v$ differ. The smallest partial cube that is not the tope graph of a COM is the full subdivision of $K_4$, see~\cite{KM}. In this case the Varchenko matrix looks like the following
\begin{align*}
\scriptsize
\left(\begin{array}{cccccccccc}
 1&x_1&x_2&x_3&x_1 x_4&x_1 x_3 x_4&x_3 x_4&x_2 x_3 x_4&x_2 x_4&x_1 x_2 x_4\\
 x_1&1&x_1 x_2&x_1 x_3&x_4&x_3 x_4&x_1 x_3 x_4&x_1 x_2 x_3 x_4&x_1 x_2 x_4&x_2 x_4\\ 
x_2&x_1 x_2&1&x_2 x_3&x_1 x_2 x_4&x_1 x_2 x_3 x_4&x_2 x_3 x_4&x_3 x_4&x_4&x_1 x_4\\
x_3&x_1 x_3&x_2 x_3&1&x_1 x_3 x_4&x_1 x_4&x_4&x_2 x_4&x_2 x_3 x_4&x_1 x_2 x_3 x_4\\ x_1 x_4&x_4&x_1 x_2 x_4&x_1 x_3 x_4&1&x_3&x_1 x_3&x_1 x_2 x_3&x_1 x_2&x_2\\
 x_1 x_3 x_4&x_3 x_4&x_1 x_2 x_3 x_4&x_1 x_4&x_3&1&x_1&x_1 x_2&x_1 x_2 x_3&x_2 x_3\\
x_3 x_4&x_1 x_3 x_4&x_2 x_3 x_4&x_4&x_1 x_3&x_1&1&x_2&x_2 x_3&x_1 x_2 x_3\\ x_2 x_3 x_4&x_1 x_2 x_3 x_4&x_3 x_4&x_2 x_4&x_1 x_2 x_3&x_1 x_2&x_2&1&x_3&x_1 x_3\\
x_2 x_4&x_1 x_2 x_4&x_4&x_2 x_3 x_4&x_1 x_2&x_1 x_2 x_3&x_2 x_3&x_3&1&x_1\\
 x_1 x_2 x_4&x_2 x_4&x_1 x_4&x_1 x_2 x_3 x_4&x_2&x_2 x_3&x_1 x_2 x_3&x_1 x_3&x_1&1
\end{array}\right)
\end{align*}

and its determinant is of the following form:
\begin{align*}
&(x_4 - 1)^3 (x_4 + 1)^3 (x_3 - 1)^3 (x_3 + 1)^3 (x_2 - 1)^3 (x_2 + 1)^3 (x_1 - 1)^3 (x_1 + 1)^3 \\ 
&(3 x_1^2 x_2^2 x_3^2 x_4^2 - x_1^2 x_2^2 x_3^2 - x_1^2 x_2^2 x_4^2 - x_1^2 x_3^2 x_4^2 - x_2^2 x_3^2 x_4^2 + 1)
\end{align*}
 Thus, in this case there is no nice factorization. 
\begin{Problem}
Are there classes of partial cubes beyond COMs, that allow for a factorization theorem of the Varchenko matrix? 
\end{Problem}

As mentioned in the introduction, we are not aware of
  an example of a COM which cannot be extended to become the supertope
  of an oriented matroid. The conjectures 
  from~\cite[Conjecture 1]{BCK} and~\cite[Conjecture 1]{KM} in our
  language are equivalent to the following:

\begin{Problem}\label{prob:supertopes}
  Are supertopes of oriented matroids a proper subclass of the class of complexes of oriented matroids?
\end{Problem}

\subsubsection*{Acknowledgements:} We sincerely thank the referees for their attentive review and valuable comments, which have significantly improved the quality of this work. KK was partially supported by the French \emph{Agence nationale de la recherche} through project ANR-17-CE40-0015 and by the Spanish \emph{Ministerio de Econom\'ia, Industria y Competitividad}
through grant RYC-2017-22701, grant PID2019-104844GB-I00 and grant PID2022-137283NB-C22.

%

\end{document}